\documentclass[11pt,reqno]{amsart}
\usepackage{amssymb, a4wide, amsmath, mathtools,xy}
\xyoption{all}
\usepackage{latexsym,pdfsync,xcolor,graphicx}
\usepackage{multirow}

\usepackage[T1]{fontenc}

\usepackage[utf8]{inputenc}

\usepackage{array}
\usepackage{upgreek}
\usepackage{pstricks-add}
\usepackage{enumerate}
\usepackage{orcidlink}
\usepackage[english]{babel}	
\usepackage{comment}
\allowdisplaybreaks

\usepackage{pgf,tikz}
\usepackage{wrapfig}
\usetikzlibrary{arrows}

\usepackage{float}
\usepackage{appendix}

\usepackage{hyperref}
\hypersetup{colorlinks,
    citecolor=black,
    filecolor=black,
    linkcolor=black,
    urlcolor=black}

\theoremstyle{plain}%
\newtheorem{theorem}{Theorem}[section]
\newtheorem{lemma}[theorem]{Lemma}%
\newtheorem{corollary}[theorem]{Corollary}%

\theoremstyle{definition}
\newtheorem{definition}[theorem]{Definition}%
\newtheorem{example}[theorem]{Example}%

\theoremstyle{remark}
\newtheorem{remark}[theorem]{Remark}

\title[Opposite brace triples, Hopf braces and matched pairs]{Opposite brace triples, Hopf braces and matched pairs of Hopf algebras}

\author[R. González Rodríguez]{Ramón González Rodríguez
\orcidlink{0000-0003-3061-6685}}
\author[B. Ramos Pérez]{Brais Ramos Pérez
\orcidlink{0009-0006-3912-4483}}

\address[R. González Rodríguez, B. Ramos Pérez]{CITMAga, 15782 Santiago de Compostela, Spain}
\address[R. González Rodríguez]{Departamento de Matemática Aplicada II, Universidade de Vigo, E.E. Telecomunicación, 36310 Vigo, Spain}
\email{rgon@dma.uvigo.es}
\address[B. Ramos Pérez]{Departamento de Matemáticas, Universidade de Santiago de Compostela, Facultade de Matemáticas, 15782 Santiago de Compostela, Spain}
\email{braisramos.perez@usc.es}

\subjclass{18M15, 16T05, 16T25.}

\keywords{Hopf algebra, Hopf brace, matched pair, opposite brace triple.}

\begin{document}
	
\begin{abstract}
     In this paper the category of opposite brace triples is introduced in a general braided monoidal setting. Under cocommutativity, it is proved to be isomorphic to the category of Hopf braces. Furthermore, if one considers the subcategories arising from fixing one of the underlying Hopf algebras, then these two categories are also isomorphic to the category of matched pairs over that Hopf algebra.      
 \end{abstract}
	
\maketitle

\section{Introduction}
In \cite{AGV}, Iván Angiono, César Galindo, and Leandro Vendramin introduced Hopf braces as a Hopf-theoretic generalization of skew braces \cite{GV}. Informally, a Hopf brace $\mathbb{H}=(H_{1},H_{2})$ consists of two Hopf algebra structures on the same object, sharing a common coalgebra, and linked by a compatibility condition between their products. Although originally defined in the category of vector spaces, it is well known that this notion extends naturally to any braided monoidal category (cf. \cite{VGRRMod}). In a Hopf brace structure, note that the roles of $H_{1}$ and $H_{2}$ are not symmetric: $(H_{1},H_{2})$ being a Hopf brace does not entail that $(H_{2},H_{1})$ is also a Hopf brace. If such a situation occurs, then $\mathbb{H}$ is referred to as a bi-Hopf brace, extending the notion of bi-skew brace given by Childs in \cite{Childs}.

These structures play a key role in the construction of solutions to the Quantum Yang-Baxter Equation \cite{Bax,Yang} (for short, QYBE), thus establishing their relevance in both algebraic and physical contexts. More concretely, skew braces are fundamental in the construction and classification of non-degenerate and non-necessarily involutive set-theoretical solutions (see \cite{Bach}), whereas, as established in \cite[Corollaries 2.4 and 2.5]{AGV}, cocommutative Hopf braces also produce solutions to QYBE which are involutive in the case that the first Hopf algebra structure $H_{1}$ of the Hopf brace is commutative.

Given the significance of Hopf braces, it is desirable to develop new objects characterizing this category--at least in the cocommutative setting--thereby opening new avenues for the construction of solutions to QYBE; this is precisely the aim of this work. 

To address this objective, a prior example appearing in the literature was the category of brace triples introduced in \cite{VGRHac}, which is proved to be isomorphic to the category of Hopf braces under cocommutativity. It is worth noting that the category of Hopf braces has been shown to be equivalent to the categories of invertible 1-cocycles \cite{AGV,VGRRMod}, post-Hopf algebras, and relative Rota-Baxter operators \cite{LST}. In order to establish these equivalences and to define explicitly the respective functors to the category of Hopf braces, all these constructions share a common feature: given a Hopf algebra, say $H$, involved in a structure of invertible 1-cocycle, post-Hopf algebra or relative Rota-Baxter operator, it is deformed yielding a new Hopf algebra $H_{{\sf new}}$ such that the pair $(H,H_{{\sf new}})$ forms a Hopf brace. In all the cases, the product of $H_{{\sf new}}$ is of this form 
\begin{gather}\label{munew}\mu_{H}^{{\sf new}}=\mu_{H}\circ(H\otimes \gamma_{H})\circ(\delta_{H}\otimes H),\end{gather}
where the morphism $\gamma_{H}\colon H\otimes H\rightarrow H$ takes a specific form in each case. To give a particular example, for an invertible 1-cocycle $(\pi\colon A\rightarrow H,\phi_{H})$, the morphism $\gamma_{H}=\phi_{H}\circ (\pi^{-1} \otimes H)$, where $\phi_{H}\colon A\otimes H\rightarrow H$ is an action inducing a left $A$-module algebra structure on $H$. Therefore, a brace triple $(H,\gamma_{H},T_{H})$ captures the conditions required for an arbitrary morphism $\gamma_{H}$ in order the pair $(H,H_{{\sf new}})$ to be a Hopf brace, where $T_{H}\colon H\rightarrow H$ is an endomorphism that, in loose terms, acts as the antipode of $H_{{\sf new}}$.

In all the cases discussed in the previous paragraph, the deformed Hopf algebra $H_{{\sf new}}$ plays the role of $H_{2}$ in the Hopf brace structure. Therefore, since the roles of $H_{1}$ and $H_{2}$ are not symmetric in a Hopf brace, it is natural to consider the same problem as before but now with $H_{{\sf new}}$ playing the role of $H_{1}$. What conditions must $\gamma_{H}$ and $T_{H}$ satisfy in this situation? Determining these conditions is the main objective of this work, giving rise to the category of opposite brace triples. This category is shown to be isomorphic to the category of Hopf braces under cocommutativity (Theorem \ref{isomain}), this being the main outcome of this paper.   

It should be noted that, although less common than the previous case in existing work, there are also examples where the deformed Hopf algebra serves as $H_{1}$. This is precisely what happens when one obtains a Hopf brace from a matched pair of Hopf algebras, as shown in \cite[Section 3]{AGV}. In addition, the form of the product $\mu_{H}^{{\sf new}}$ in this case follows the same pattern as in \eqref{munew}. This correspondence gives a categorical isomorphism between Hopf braces and matched pairs of Hopf algebras \cite[Theorem 3.3]{AGV} recalled here in Theorem \ref{isoHBrMP}, which, combined with Theorem \ref{isomain}, also yields an isomorphism between the category of opposite brace triples and that of matched pairs of Hopf algebras (Corollary \ref{coroMP}).

\section{Preliminaries}
Throughout this paper, ${\sf C}$ denotes a strict braided monoidal category with tensor functor $\otimes$, unit object $K$, and braiding $c$. Recall that a strict monoidal category is a monoidal category in which the associativity, left and right unit constraints are identities, which makes the notation and arguments much easier to follow. This assumption does not entail any loss of generality because any monoidal category is monoidally equivalent to a strict one by the well-known Mac Lane's coherence theorem (see \cite{MacLane} for more details) and, consequently, any result proved in a strict setting extends automatically to the non-strict one with the appropriate modifications.

For the reader's convenience, in this section we recall some standard definitions that we will use recurrently in the development of the article.

\begin{definition}
An algebra in {\sf C} is a triple $A=(A,\eta_{A},\mu_{A})$ where $\eta_{A}\colon K\rightarrow A$ (unit) and $\mu_{A}\colon A\otimes A\rightarrow A$ (product) are morphisms in {\sf C} satisfying the following conditions:
\begin{align*}
\mu_{A}\circ (\eta_{A}\otimes A)={\rm id}_{A}=\mu_{A}\circ(A\otimes\eta_{A})\quad&\textnormal{(unit property)},\\\mu_{A}\circ(\mu_{A}\otimes A)=\mu_{A}\circ(A\otimes\mu_{A})\quad&\textnormal{(associativity)}.
\end{align*}
	
Note that, given $B=(B,\eta_{B},\mu_{B})$ another algebra in {\sf C}, the tensor product $A\otimes B$ admits an algebra structure whose unit and product are defined as follows:
$$
\eta_{A\otimes B}\coloneqq \eta_{A}\otimes\eta_{B}, \quad
\mu_{A\otimes B}\coloneqq (\mu_{A}\otimes\mu_{B})\circ(A\otimes c_{B,A}\otimes B).
$$
	
Moreover, a morphism $f\colon A\rightarrow B$ in ${\sf C}$ is said to be an algebra morphism if it preserves the unit and the product, that is,
$$
f\circ\eta_{A}=\eta_{B}, \quad f\circ\mu_{A}=\mu_{B}\circ(f\otimes f).
$$
	
Dually, we will say that a triple $C=(C,\varepsilon_{C},\delta_{C})$ is a coalgebra in {\sf C} if  $\varepsilon_{C}\colon C\rightarrow K$ (counit) and $\delta_{C}\colon C\rightarrow C\otimes C$ (coproduct) are morphisms in {\sf C} satisfying the following conditions:
\begin{align*}
(\varepsilon_{C}\otimes C)\circ\delta_{C}={\rm id}_{C}=(C\otimes\varepsilon_{C})\circ\delta_{C}\quad&\textnormal{(counit property)},\\(\delta_{C}\otimes C)\circ\delta_{C}=(C\otimes\delta_{C})\circ\delta_{C}\quad&\textnormal{(coassociativity)}.
\end{align*}
	
Likewise, if $D=(D,\varepsilon_{D},\delta_{D})$ is another coalgebra in {\sf C}, the tensor product $C\otimes D$ is a coalgebra with the following counit and coproduct:
$$
\varepsilon_{C\otimes D}\coloneqq\varepsilon_{C}\otimes\varepsilon_{D},\quad
\delta_{C\otimes D}\coloneqq(C\otimes c_{C,D}\otimes D)\circ(\delta_{C}\otimes\delta_{D}).
$$
	
Besides, a morphism $g\colon C\rightarrow D$ in ${\sf C}$ is a coalgebra morphism if the equalities
$$
\varepsilon_{D}\circ g=\varepsilon_{C}, \quad\delta_{D}\circ g=(g\otimes g)\circ\delta_{C}
$$
hold, i.e., $g$ preserves the counit and it is comultiplicative.
\end{definition}

\begin{definition}
Let $C$ be a coalgebra and let $A$ be an algebra in ${\sf C}$. Let $f,g:C\rightarrow A$ be a pair of morphisms in ${\sf C}$. We define the convolution product of $f$ and $g$ as 
\[f\ast g\coloneqq\mu_{A}\circ (f\otimes g)\circ\delta_{C}.\]

The set of morphisms in ${\sf C}$ from $C$ to $A$, denoted by $\operatorname{Hom}_{{\sf C}}(C,A)$, is a monoid with the convolution product whose unit is given by $ \eta_{A}\circ\varepsilon_{C}=\varepsilon_{C}\otimes\eta_{A}$.
\end{definition}

When an object in the category ${\sf C}$ is endowed with both an algebra and a coalgebra structure that are compatible with each other, we obtain the notion of a bialgebra.
\begin{definition}
	A 5-tuple $B=(B,\eta_{B},\mu_{B},\varepsilon_{B},\delta_{B})$ is said to be a bialgebra in {\sf C} if $(B,\eta_{B},\mu_{B})$ is an algebra in {\sf C}, $(B,\varepsilon_{B},\delta_{B})$ is a coalgebra in {\sf C} and $\eta_{B}$ and $\mu_{B}$ are coalgebra morphisms (equivalently, if $\varepsilon_{B}$ and $\delta_{B}$ are algebra morphisms).
	
	If $D=(D,\eta_{D},\mu_{D},\varepsilon_{D},\delta_{D})$ is another bialgebra in ${\sf C}$, a morphism in ${\sf C}$ $f\colon B\rightarrow D$ is a bialgebra morphism if it is simultaneously an algebra and a coalgebra morphism.
\end{definition}

In the following definition we recall the notion of Hopf algebra, which will be one of the key structures driving the development of the paper.
\begin{definition}
	A 6-tuple $H=(H,\eta_{H},\mu_{H},\varepsilon_{H},\delta_{H},\lambda_{H})$ is said to be a Hopf algebra in {\sf C} when $(H,\eta_{H},\mu_{H},\varepsilon_{H},\delta_{H})$ is a bialgebra such that there exists an endomorphism $\lambda_{H}\colon H\rightarrow H$, called the antipode, satisfying the following identity:
	\begin{gather}\label{antipode}
		\lambda_{H}\ast {\rm id}_{H}=\varepsilon_{H}\otimes\eta_{H}={\rm id}_{H}\ast \lambda_{H}.
	\end{gather}
\end{definition}

The equation \eqref{antipode} means that the antipode $\lambda_{H}$ is the inverse of the identity for the convolution product in $\operatorname{Hom}_{{\sf C}}(H,H)$ and, as a consequence, it is unique.

A Hopf algebra $H$ is said to be commutative when the underlying algebra is commutative, i.e., if the equality $\mu_{H}\circ c_{H,H}=\mu_{H}$, whereas $H$ is said to be cocommutative when the underlying coalgebra is cocommutative, that is, in case that $c_{H,H}\circ\delta_{H}=\delta_{H}$.

In any Hopf algebra $H$, its antipode satisfies the following properties: it is antimultiplicative and anticomultiplicative, which means that the equalities
\begin{gather}\label{a-antip1}
	\lambda_{H}\circ\mu_{H}=\mu_{H}\circ c_{H,H}\circ(\lambda_{H}\otimes\lambda_{H}),\\
	\label{a-antip2}
	\delta_{H}\circ\lambda_{H}=(\lambda_{H}\otimes\lambda_{H})\circ c_{H,H}\circ \delta_{H}
\end{gather}
hold, and $\lambda_{H}$ also preserves the unit and the counit, i.e.,
\begin{gather}\label{u-antip1}
	\lambda_{H}\circ\eta_{H}=\eta_{H},\\
	\label{u-antip2}\varepsilon_{H}\circ\lambda_{H}=\varepsilon_{H}.
\end{gather}

Thus, it is a direct consequence of the previous equalities that, when $H$ is commutative, $\lambda_{H}$ is an algebra morphism and, in case that $H$ is cocommutative, $\lambda_{H}$ is a coalgebra morphism. In addition, when $H$ is commutative or cocommutative, the equality 
\begin{gather}\label{lambdasquareid}
	\lambda_{H}\circ\lambda_{H}={\rm id}_{H}
\end{gather}
holds, i.e., $\lambda_{H}$ is an involution.

Given $B=(B,\eta_{B},\mu_{B},\varepsilon_{B},\delta_{B},\lambda_{B})$ another Hopf algebra in ${\sf C}$, a morphism $f\colon H\rightarrow B$ is a Hopf algebra morphism if it is a bialgebra morphism. Note that, in this case, $f$ commutes with the antipodes, that is,
\begin{gather}\label{morant}
	\lambda_{B}\circ f=f\circ\lambda_{H}.
\end{gather}

Note that, if $H$ is a Hopf algebra whose antipode $\lambda_{H}$ is an isomorphism in ${\sf C}$ (this situation occurs, for example, when $H$ is finite \cite[Corollary 1]{PuraE}), then $$H^{{\rm op}}=(H,\eta_{H},\mu_{H}^{{\rm op}},\varepsilon_{H},\delta_{H},\lambda_{H}^{-1}),$$
where $\mu_{H}^{{\rm op}}\coloneqq\mu_{H}\circ c_{H,H}^{-1}$, is a Hopf algebra in the mirror category of ${\sf C}$, $\overline{{\sf C}}$, which is the same as ${\sf C}$, but equipped with the braiding $c^{-1}$.

In \cite[Corollary 5]{Sch}, Schauenburg shows that, if $H$ is a cocommutative Hopf algebra in ${\sf C}$, the braiding $c_{H,H}$ is involutive, that is,
\begin{gather}\label{ccb}
	c_{H,H}\circ c_{H,H}={\rm id}_{H\otimes H}.
\end{gather} 
Therefore, in the cocommutative setting, $H^{{\rm op}}=(H,\eta_{H},\mu_{H}^{{\rm op}}=\mu_{H}\circ c_{H,H},\varepsilon_{H},\delta_{H},\lambda_{H})$ is also a Hopf algebra in ${\sf C}$ since $\lambda_{H}$ and $c_{H,H}$ are involutions under the cocommutativity assumption on  $H$ .

We now review the notion of a left module over a Hopf algebra; however, each of the definitions below admits a right-handed analogue, which can be formulated easily.
\begin{definition}
Let $H$ be a Hopf algebra in ${\sf C}$. A left $H$-module in ${\sf C}$ is a pair $(M,\varphi_{M})$ where $\varphi_{M}\colon H\otimes M\rightarrow M$ (action) is a morphism in ${\sf C}$ which satisfies the following equalities:
\begin{gather}\label{actioneta}
\varphi_{M}\circ(\eta_{H}\otimes M)={\rm id}_{M},\\
\label{actionprod}
\varphi_{M}\circ(H\otimes\varphi_{M})=\varphi_{M}\circ(\mu_{H}\otimes M).
\end{gather}

Let $(N,\varphi_{N})$ be a left $H$-module. A morphism $f\colon M\rightarrow N$ is a morphism of left $H$-modules (or, in other words, $f$ is $H$-linear) if 
\begin{gather*}\label{mod_mor}
	f\circ\varphi_{M}=\varphi_{N}\circ(H\otimes f).
\end{gather*}

With the previous morphisms, these objects constitute a category denoted by ${}_{H}{\sf Mod}$.
\end{definition}

\begin{definition}
Let $H$ be a Hopf algebra and $A$ an algebra in ${\sf C}$. A left $H$-module $(A,\varphi_{A})$ is said to be a left $H$-module algebra if $\eta_{A}$ and $\mu_{A}$ are $H$-linear morphisms, that is, if the following equalities
\begin{gather}
\label{Hmodalg1}\varphi_{A}\circ(H\otimes\eta_{A})=\varepsilon_{H}\otimes\eta_{A},\\		
\label{Hmodalg2}\varphi_{A}\circ(H\otimes\mu_{A})=\mu_{A}\circ\varphi_{A\otimes A},
\end{gather} 
hold, where $\varphi_{A\otimes A}\coloneqq(\varphi_{A}\otimes\varphi_{A})\circ(H\otimes c_{H,A}\otimes A)\circ(\delta_{H}\otimes A\otimes A)$.
\end{definition}
\begin{example}
Given a Hopf algebra $H$, $(H,\varphi_{H}^{{\rm ad}})$ is a left $H$-module algebra where $\varphi_{H}^{{\rm ad}}$ is the so-called adjoint action defined by 
\[\varphi_{H}^{{\rm ad}}\coloneqq \mu_{H}\circ(\mu_{H}\otimes\lambda_{H})\circ(H\otimes c_{H,H})\circ(\delta_{H}\otimes H).\]
\end{example}

\begin{definition}
Let $H$ be a Hopf algebra and $C$ a coalgebra in ${\sf C}$. A left $H$-module $(C,\varphi_{C})$ satisfying that $\varepsilon_{C}$ and $\delta_{C}$ are $H$-linear morphisms is said to be a left $H$-module coalgebra. In other words, $(C,\varphi_{C})$ is a left $H$-module coalgebra if the equalities 
\begin{equation}
\label{Hmodcoalg1}\varepsilon_{C}\circ\varphi_{C}=\varepsilon_{H}\otimes\varepsilon_{C},
\end{equation}
\begin{equation}\label{Hmodcoalg2}\delta_{C}\circ\varphi_{C}=\varphi_{C\otimes C}\circ(H\otimes\delta_{C})
\end{equation} 
hold. Note that \eqref{Hmodcoalg1} and \eqref{Hmodcoalg2} are equivalent to the fact that $\varphi_{C}$ is a coalgebra morphism.
\end{definition}

Let $H$ and $B$ be Hopf algebras in ${\sf C}$. If $(B,\varphi_{B})$ is both a left $H$-module algebra and a left $H$-module coalgebra, then it will be called a left $H$-module algebra-coalgebra.

The second family of structures we will work with in this paper are Hopf braces, a generalization of the usual Hopf algebras introduced by Angiono, Galindo and Vendramin \cite{AGV} in the vector space setting as the quantum version of a skew brace \cite{GV}. In a general braided monoidal framework, these objects are defined as follows.

\begin{definition}
	Let $(H,\varepsilon_{H},\delta_{H})$ be a coalgebra in {\sf C} and let us assume that $H$ admits two different algebra structures in {\sf C}: $(H,\eta_{H}^{1},\mu_{H}^{1})$ and $(H,\eta_{H}^{2},\mu_{H}^{2})$. We will say that a 9-tuple
	\[(H,\eta_{H}^{1},\mu_{H}^{1},\eta_{H}^{2},\mu_{H}^{2},\varepsilon_{H},\delta_{H},\lambda_{H}^{1},\lambda_{H}^{2})\]
	is a Hopf brace in {\sf C} if the following requirements hold:
	\begin{itemize}
		\item[(i)] $H_{1}=(H,\eta_{H}^{1},\mu_{H}^{1},\varepsilon_{H},\delta_{H},\lambda_{H}^{1})$ is a Hopf algebra in {\sf C}.
		\item[(ii)]  $H_{2}=(H,\eta_{H}^{2},\mu_{H}^{2},\varepsilon_{H},\delta_{H},\lambda_{H}^{2})$ is a Hopf algebra in {\sf C}.
		\item[(iii)] The following identity involving the products $\mu_{H}^{1}$ and $\mu_{H}^{2}$ holds:
		\begin{gather}\label{compatHbrace}
			\mu_{H}^{2}\circ(H\otimes\mu_{H}^{1})=\mu_{H}^{1}\circ(\mu_{H}^{2}\otimes\Gamma_{H_{1}})\circ(H\otimes c_{H,H}\otimes H)\circ(\delta_{H}\otimes H\otimes H),
		\end{gather}
		where 
		\begin{equation}\label{def_GammaH1}
			\Gamma_{H_{1}}\coloneqq \mu_{H}^{1}\circ(\lambda_{H}^{1}\otimes\mu_{H}^{2})\circ(\delta_{H}\otimes H).
		\end{equation}
	\end{itemize}
	
	Following the notation introduced in \cite{RGON}, we will denote Hopf braces by $\mathbb{H}=(H_{1},H_{2})$ or, when there is no confusion between the Hopf algebras involved, only by $\mathbb{H}$. A Hopf brace $\mathbb{H}$ is said to be cocommutative if $c_{H,H}\circ\delta_{H}=\delta_{H}$.
	
	These structures constitute a category whose morphisms are defined as follows: Given another Hopf brace $\mathbb{B}=(B_{1},B_{2})$, a morphism $f\colon \mathbb{H}\rightarrow \mathbb{B}$ is a morphism of Hopf braces if $f\colon H_{1}\rightarrow B_{1}$ and $f\colon H_{2}\rightarrow B_{2}$ are both Hopf algebra morphisms in ${\sf C}$. The category of Hopf braces will be denoted by ${\sf HBr}$. Cocommutative Hopf braces form a full subcategory of ${\sf HBr}$ which we will denote by ${\sf cocHBr}$.
\end{definition}

Given a Hopf algebra $H$ in ${\sf C}$, ${\sf HBr}(H)$ will denote the full subcategory of ${\sf HBr}$ whose objects are of the form $\mathbb{H}=(H_{1},H)$, that is, for any Hopf brace in ${\sf HBr}(H)$, the Hopf algebra $H_{2}$ coincides with the fixed Hopf algebra $H$. In the case that $H$ is cocommutative, we will denote this subcategory by ${\sf cocHBr}(H)$.

\begin{example}[Skew braces]\label{skewbraces}
	Since a Hopf algebra in the symmetric monoidal category of sets, {\sf Set}, is just a group, particularizing the previous notion of a Hopf brace with ${\sf C}={\sf Set}$ yields the notion of a skew brace, in which we have a pair of groups, $(G,\cdot)$ and $(G,\circ)$, satisfying the equation 
	\[a\circ(b\cdot c)=(a\circ b)\cdot a^{-1}\cdot(a\circ c), \textnormal{ for all }a,b,c\in G,\]
	which corresponds to \eqref{compatHbrace} in this setting. Therefore, any skew brace is an example of a Hopf brace considering that ${\sf C}={\sf Set}$.
	
	To refer to the the first group structure $(G,\cdot)$ involved in a skew brace, we will use the notation $G_{\cdot}$, while for the second group structure $(G,\circ)$, we will use $G_{\circ}$.
\end{example}
\begin{example}\label{galg}
Let $\mathbb{K}$ be a field. By linearization, it is well known that if $(G,\cdot,\circ)$ is a skew brace, then the respective group algebras of $G_{\cdot}$ and $G_{\circ}$, $\mathbb{K}[G_{\cdot}]$ and $\mathbb{K}[G_{\circ}]$, constitute a Hopf brace in ${\sf C}={}_{\mathbb{K}}{\sf Vect}$, denoted by $\mathbb{K}[\mathbb{G}]=(\mathbb{K}[G_{\cdot}],\mathbb{K}[G_{\circ}])$.
\end{example}
\begin{example}
	Any Hopf algebra $H=(H,\eta_{H},\mu_{H},\varepsilon_{H},\delta_{H},\lambda_{H})$ in ${\sf C}$ leads to a trivial Hopf brace in ${\sf C}$: $\mathbb{H}_{{\rm triv}}=(H,H)$. Indeed, when both Hopf algebras coincide in a Hopf brace structure, the compatibility condition \eqref{compatHbrace} always holds. In this case, $\Gamma_{H}=\varepsilon_{H}\otimes H$.
\end{example}

Given a Hopf brace $\mathbb{H}=(H_{1},H_{2})$, it is satisfied that $\eta_{H}^{1}=\eta_{H}^{2}$ \cite[Remark 1.3]{AGV}. Therefore, from now on we will denote both units by $\eta_{H}$. Besides, $(H_{1},\Gamma_{H_{1}})$ is a left $H_{2}$-module algebra \cite[Lemma 1.8]{AGV} and, reciprocally, this property characterizes the Hopf brace structure \cite[Theorem 1.16]{RGONRAMOS}. In the proof of the previous result, an important equality is the following:
\begin{gather}\label{agv1}
	\Gamma_{H_{1}}\circ(H\otimes\lambda_{H}^{1})=\mu_{H}^{1}\circ((\lambda_{H}^{1}\circ\mu_{H}^{2})\otimes H)\circ (H\otimes c_{H,H})\circ(\delta_{H}\otimes H),
\end{gather}
which will be useful in the development of this article (see \cite[Lemma 1.7]{AGV}).

Moreover, by \eqref{def_GammaH1}, coassociativity of $\delta_{H}$, associativity of $\mu_{H}^{1}$ and \eqref{antipode} for $H_{1}$, we obtain the following expression for $\mu_{H}^{2}$:
\begin{gather}\label{mu2-exp}
	\mu_{H}^{2}=\mu_{H}^{1}\circ(H\otimes \Gamma_{H_{1}})\circ(\delta_{H}\otimes H).
\end{gather}

In addition, by \eqref{mu2-exp} and the fact that $(H_{1},\Gamma_{H_{1}})$ is a left $H_{2}$-module whenever $\mathbb{H}$ is a Hopf brace, the following expression for $\mu_{H}^{1}$ is obtained:
\begin{gather}
	\label{mu1-exp}
	\mu_{H}^{1}=\mu_{H}^{2}\circ(H\otimes(\Gamma_{H_{1}}\circ(\lambda_{H}^{2}\otimes H)))\circ(\delta_{H}\otimes H).
\end{gather}
Indeed,
\begin{itemize}
	\itemindent=-32pt 
	\item[ ]$\hspace{0.38cm}\mu_{H}^{2}\circ(H\otimes(\Gamma_{H_{1}}\circ(\lambda_{H}^{2}\otimes H)))\circ(\delta_{H}\otimes H)$
	\item[]$=\mu_{H}^{1}\circ(H\otimes(\Gamma_{H_{1}}\circ(H\otimes\Gamma_{H_{1}})))\circ (((\delta_{H}\otimes\lambda_{H}^{2})\circ\delta_{H})\otimes H)$ {\footnotesize(by \eqref{mu2-exp})}
	\item[]$=\mu_{H}^{1}\circ(H\otimes(\Gamma_{H_{1}}\circ (({\rm id}_{H}\ast\lambda_{H}^{2})\otimes H)))\circ(\delta_{H}\otimes H)$ {\footnotesize(by the coassociativity of $\delta_{H}$ and \eqref{actionprod} for $(H_{1},\Gamma_{H_{1}})$)}
	\item[]$=\mu_{H}^{1}$ {\footnotesize(by \eqref{antipode} for $H_{2}$, the counit property and \eqref{actioneta} for $(H_{1},\Gamma_{H_{1}})$).}
\end{itemize}
In the case that ${\sf C}={}_{\mathbb{K}}{\sf Vect}$, the previous equalities coincide with \cite[Remark 1.9]{AGV}.

Furthermore, when ${\mathbb H}$ is cocommutative, the action $\Gamma_{H_{1}}$ is also a coalgebra morphism \cite[Lemma~2.5]{VGRRProj}, which implies that $(H_{1},\Gamma_{H_{1}})$ is a left $H_{2}$-module coalgebra.

It is also important to note that, if $\mathbb{H}$ is a cocommutative Hopf brace, $(H,\Phi_{H})$ is a right $H_{2}$-module coalgebra \cite[Lemma 2.2(1)]{AGV} with the action given by
\begin{gather*}
	\Phi_{H}\coloneqq \mu_{H}^{2}\circ ((\lambda_{H}^{2}\circ\Gamma_{H_{1}})\otimes\mu_{H}^{2})\circ(H\otimes c_{H,H}\otimes H)\circ(\delta_{H}\otimes\delta_{H}).
\end{gather*}

As was proved by Angiono, Galindo and Vendramin \cite{AGV} in the vector space setting, the category of Hopf braces is equivalent to the category of invertible 1-cocycles (see \cite[Theorem~2]{VGRRMod} for the details of this result in a general braided monoidal context), and also to the category of matched pairs over a fixed Hopf algebra $A$. In this paper, we will be interested in the construction of Hopf braces via matched pairs. Therefore, in what follows we will detail the functors involved in that categorical isomorphism in a general braided monoidal framework, whose proof is analogous to the one given by Angiono, Galindo and Vendramin \cite[Theorem~3.3]{AGV} in the category of vector spaces over a field.

\begin{definition}\label{MPdef}
	Let $H$ and $A$ be Hopf algebras in {\sf C}. A 4-tuple $(A,H,\varphi_{A},\phi_{H})$ is said to be a matched pair of Hopf algebras if the following conditions hold:
	\begin{itemize}
		\item[(i)] $(A,\varphi_{A})$ is a left $H$-module coalgebra and $(H,\phi_{H})$ is a right $A$-module coalgebra,
		\item[(ii)] $\varphi_{A}\circ (H\otimes\eta_{A})=\varepsilon_{H}\otimes\eta_{A}$, i.e., $\eta_{A}$ is a morphism of left $H$-modules, 
		\item[(iii)] $\phi_{H}\circ(\eta_{H}\otimes A)=\eta_{H}\otimes\varepsilon_{A}$, i.e., $\eta_{H}$ is a morphism of right $A$-modules,
		\item[(iv)] $\varphi_{A}\circ(H\otimes\mu_{A})=\mu_{A}\circ (A\otimes\varphi_{A})\circ (\Psi\otimes A)$,
		\item[(v)] $\phi_{H}\circ(\mu_{H}\otimes A)=\mu_{H}\circ(\phi_{H}\otimes H)\circ(H\otimes \Psi)$,
		\item[(vi)] $c_{A,H}\circ\Psi=(\phi_{H}\otimes\varphi_{A})\circ (H\otimes c_{H,A}\otimes A)\circ(\delta_{H}\otimes\delta_{A})$,
	\end{itemize}
	where \begin{equation*}\label{interw}\Psi\coloneqq (\varphi_{A}\otimes\phi_{H})\circ (H\otimes c_{H,A}\otimes A)\circ(\delta_{H}\otimes\delta_{A}).\end{equation*}
\end{definition}

Matched pairs of Hopf algebras constitute a category denoted by ${\sf MP}$, whose morphisms are given by pairs
\[(f,g)\colon (A,H,\varphi_{A},\phi_{H})\rightarrow(A',H',\varphi_{A'},\phi_{H'})\]
where $f\colon A\rightarrow A'$ and $g\colon H\rightarrow H'$ are Hopf algebra morphisms in ${\sf C}$ such that the following conditions hold:
\begin{gather}\label{morMP}
	f\circ\varphi_{A}=\varphi_{A'}\circ(g\otimes f),\qquad
	g\circ\phi_{H}=\phi_{H'}\circ(g\otimes f).
\end{gather}

Let $A$ be a cocommutative Hopf algebra in ${\sf C}$. We will consider the subcategory ${\sf MP}(A)$ of ${\sf MP}$ whose objects are those of the form $(A,A,\varphi_{A},\phi_{A})$ satisfying that
\begin{gather}\label{condinterw}
	\mu_{A}=\mu_{A}\circ\Psi,
\end{gather}
whereas the morphisms in ${\sf MP}(A)$ are given by 
\[(f,f)\colon (A,A,\varphi_{A},\phi_{A})\rightarrow(A,A,\varphi'_{A},\phi'_{A})\]
satisfying the conditions in \eqref{morMP}. To make the notation simpler, morphisms in ${\sf MP}(A)$ will be identified with a single morphism $f\equiv(f,f)$.

Hence, the theorem we are interested in asserts the following:
\begin{theorem}\label{isoHBrMP}
	Let $A$ be a cocommutative Hopf algebra in ${\sf C}$. The categories ${\sf MP}(A)$ and ${\sf cocHBr}(A)$ are isomorphic.
\end{theorem}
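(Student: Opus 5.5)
We construct explicit mutually inverse functors ${\sf F}\colon {\sf MP}(A)\rightarrow {\sf cocHBr}(A)$ and ${\sf G}\colon {\sf cocHBr}(A)\rightarrow {\sf MP}(A)$, both acting as the identity on morphisms, following \cite[Theorem~3.3]{AGV} but written in the braided setting. Throughout we use cocommutativity of $A$ together with \eqref{ccb}, \eqref{lambdasquareid}, and the fact that $\lambda_{A}$ is a coalgebra morphism.

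\textbf{Construction of ${\sf F}$.} Given $(A,A,\varphi_{A},\phi_{A})$ in ${\sf MP}(A)$, we keep $A_{2}=A$ and define on the same coalgebra the deformed product
\[\mu_{A}^{1}\coloneqq \mu_{A}\circ(A\otimes \varphi_{A})\circ(A\otimes\lambda_{A}\otimes A)\circ(\delta_{A}\otimes A),\]
so that $\mu_{A}^{1}$ has the shape \eqref{munew} with $\gamma_{A}=\varphi_{A}\circ(\lambda_{A}\otimes A)$. This mimics $a\cdot b=a_{(1)}(S(a_{(2)})\rightharpoonup b)$; it is the product obtained by transporting the product of the bicrossed product $A\bowtie A$ along the coalgebra isomorphism $A\rightarrow A\bowtie A$, $a\mapsto a_{(1)}\otimes \lambda_{A}(a_{(2)})$, or some variant of it. The unit is $\eta_{A}$, and the antipode $\lambda_{A}^{1}$ is obtained by transporting the antipode of $A\bowtie A$ along the same map. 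Associativity of $\mu_{A}^{1}$ and the bialgebra property follow from the matched pair axioms (i)--(vi) and \eqref{condinterw}. Condition \eqref{condinterw} is exactly what makes the multiplication map $A\bowtie A\rightarrow A$ (or its twisted version) an algebra morphism, so the image structure is well defined. We then verify \eqref{compatHbrace}, which by \cite[Theorem 1.16]{RGONRAMOS} amounts to checking that $(A_{1},\Gamma_{A_{1}})$ is a left $A$-module algebra. Computing $\Gamma_{A_{1}}$ via \eqref{def_GammaH1} should give $\Gamma_{A_{1}}=\varphi_{A}$, and the module-algebra identity then reduces to axiom (iv) rewritten with $\mu_{A}^{1}$.

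\textbf{Construction of ${\sf G}$.} Given a cocommutative Hopf brace $(A_{1},A)$, we set $\varphi_{A}\coloneqq\Gamma_{A_{1}}$, which is a left $A$-module coalgebra by \cite[Lemma~2.5]{VGRRProj}, and $\phi_{A}\coloneqq\Phi_{A}$, which is a right $A$-module coalgebra by \cite[Lemma 2.2(1)]{AGV}. Axioms (ii) and (iii) follow from \eqref{Hmodalg1} and a direct unit computation. For (iv), we combine \eqref{mu1-exp} with the module-algebra property of $\Gamma_{A_{1}}$. For (v) and (vi), we expand $\Psi$ and use \eqref{agv1}; the key identity to establish first is
\[\mu_{A}\circ\Psi=\mu_{A}, \qquad \mu_{A}\circ(\Gamma_{A_{1}}\otimes\lambda_{A}\circ\Gamma_{A_{1}}\ldots)\;\text{(form of }\Phi_{A}\text{ reconstructing }\mu_{A}).\]
This is \eqref{condinterw}, and it follows since by construction $\Gamma\cdot\Phi$ reproduces $\mu_{A}$ by the antipode identity.

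\textbf{Morphisms and inverseness.} For morphisms, a Hopf brace morphism commutes with $\Gamma$ and $\Phi$ because these are built from structure maps, which gives \eqref{morMP}. Conversely, \eqref{morMP} forces $f$ to preserve $\mu^{1}$ and $\lambda^{1}$. For ${\sf G}\circ{\sf F}={\rm id}$ we need $\Gamma_{A_{1}}=\varphi_{A}$ and $\Phi_{A}=\phi_{A}$. The second is obtained from (vi) and \eqref{condinterw}, which let us recover $\phi_{A}$ from $\mu_{A}$ and $\varphi_{A}$. For ${\sf F}\circ{\sf G}={\rm id}$, the identity on products is exactly \eqref{mu1-exp}.

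\textbf{Main obstacle.} The hardest step is associativity of $\mu^{1}_{A}$ together with the braided verification of (v)--(vi) for $\Phi_{A}$. Here \eqref{ccb} and cocommutativity must be used repeatedly to move braidings past coproducts. I would first prove the identity $\Phi_{A}=\ldots$ expressed as $\lambda_{A}\circ\Gamma_{A_{1}}$ convolved with $\mu_{A}$, and deduce (vi) from it.
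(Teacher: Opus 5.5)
Your architecture matches the paper's. You use the same pair of functors (your ${\sf F}$ is the paper's ${\sf G}$ and vice versa) and the same deformed product $\overline{\mu}_{A}=\mu_{A}\circ(A\otimes(\varphi_{A}\circ(\lambda_{A}\otimes A)))\circ(\delta_{A}\otimes A)$ with $\Gamma_{\overline{A}}=\varphi_{A}$. As in the paper, \eqref{mu1-exp} handles one composite, and recovering $\phi_{A}$ from $\mu_{A}$ and $\varphi_{A}$ via \eqref{condinterw} handles the other. The paper verifies nothing beyond this and defers the rest to \cite[Propositions 3.1, 3.2 and Theorem 3.3]{AGV}.

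The step that fails as written is the one you rely on for associativity and the antipode of $\overline{A}$. The map $j=(A\otimes\lambda_{A})\circ\delta_{A}\colon A\rightarrow A\bowtie A$ is not an isomorphism in general. It is only a split monomorphism (a coalgebra morphism by cocommutativity) with retraction $p=A\otimes\varepsilon_{A}$. So nothing can be transported until you prove $\mu_{A\bowtie A}\circ(j\otimes j)=j\circ\overline{\mu}_{A}$. The fact that \eqref{condinterw} makes $m=\mu_{A}\colon A\bowtie A\rightarrow A$ an algebra morphism is the right ingredient, but it does not give this by itself. You also leave the antipode unspecified; in the paper it is $\overline{\lambda}_{A}=\varphi_{A}\circ(A\otimes\lambda_{A})\circ\delta_{A}$.

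Your idea can be repaired in three steps.
(1) $p\circ\mu_{A\bowtie A}\circ(j\otimes j)=\overline{\mu}_{A}$, because $(A\otimes\varepsilon_{A})\circ\Psi=\varphi_{A}$, using $\varepsilon_{A}\circ\phi_{A}=\varepsilon_{A}\otimes\varepsilon_{A}$.
(2) By cocommutativity, $j$ lands in the $m$-coinvariants: $(A\otimes A\otimes\mu_{A})\circ\delta_{A\otimes A}\circ j=j\otimes\eta_{A}$. Since $m$ is an algebra morphism and $\mu_{A\bowtie A}$ is a coalgebra morphism, $X=\mu_{A\bowtie A}\circ(j\otimes j)$ satisfies the same identity.
(3) Any $X$ with $(A\otimes A\otimes\mu_{A})\circ\delta_{A\otimes A}\circ X=X\otimes\eta_{A}$ satisfies $X=j\circ p\circ X$. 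To see this, apply $A\otimes\varepsilon_{A}\otimes A$, then $(A\otimes\mu_{A})\circ(A\otimes\lambda_{A}\otimes A)\circ(\delta_{A}\otimes A)$.

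Hence $j$ is multiplicative, and associativity of $\overline{\mu}_{A}$ follows from $p\circ j={\rm id}_{A}$. An analogous argument applied to $S_{A\bowtie A}\circ j$ gives $\overline{\lambda}_{A}=p\circ S_{A\bowtie A}\circ j$.

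This presupposes that $A\bowtie A$, with product $(\mu_{A}\otimes\mu_{A})\circ(A\otimes\Psi\otimes A)$, is a Hopf algebra in ${\sf C}$ (the braided double cross product). That is where axioms (i)--(vi) are actually used, so it must be established or cited.

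Similarly, the module-algebra check for $(\overline{A},\varphi_{A})$ does not reduce to (iv) alone. After expanding $\overline{\mu}_{A}$, you must rewrite $\phi_{A}$ as $\mu_{A}\circ((\lambda_{A}\circ\varphi_{A})\otimes\mu_{A})\circ(A\otimes c_{A,A}\otimes A)\circ(\delta_{A}\otimes\delta_{A})$. That rewriting uses \eqref{condinterw} and the module-coalgebra property of $\varphi_{A}$.
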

\begin{proof}
	By \cite[Proposition 3.1, Proposition 3.2 and Theorem 3.3]{AGV}, the required categorical isomorphism follows from the existence of the following functors:
	\begin{align*}
		{\sf F}\colon {\sf cocHBr}(A)\longrightarrow{\sf MP}(A),
	\end{align*}
	defined on objects by ${\sf F}(\mathbb{A}=(A_{1},A))=(A,A,\Gamma_{A_{1}},\Phi_{A})$, and on morphisms by the identity,~and 
	\begin{align*}
		{\sf G}\colon {\sf MP}(A)\longrightarrow{\sf cocHBr}(A),
	\end{align*}
	whose definition on objects is ${\sf G}((A,A,\varphi_{A},\phi_{A}))=\overline{\mathbb{A}}=(\overline{A},A)$, where 
	\[\overline{A}=(A,\eta_{A},\overline{\mu}_{A},\varepsilon_{A},\delta_{A},\overline{\lambda}_{A})\]
	is the Hopf algebra in ${\sf C}$ with product and antipode given by
	\begin{gather*}
		\overline{\mu}_{A}\coloneqq \mu_{A}\circ(A\otimes(\varphi_{A}\circ (\lambda_{A}\otimes A)))\circ(\delta_{A}\otimes A),\\
		\overline{\lambda}_{A}\coloneqq\varphi_{A}\circ(A\otimes\lambda_{A})\circ\delta_{A},
	\end{gather*}
	while ${\sf G}$ is the identity on morphisms. Note that, in this situation, $\Gamma_{\overline{A}}=\varphi_{A}.$
\end{proof}
\begin{remark}\label{Fgeneral}
One readily checks that the functor ${\sf F}$ is well defined without restricting to the subcategory ${\sf cocHBr}(A)$; rather, one may consider the entire subcategory of cocommutative Hopf braces without fixing any particular Hopf algebra $A$. That is, there exists a functor
\[{\sf F}\colon {\sf cocHBr}\longrightarrow {\sf MP}\]
defined on objects by ${\sf F}(\mathbb{H}=(H_{1},H_{2}))=(H_{2},H_{2},\Gamma_{H_{1}},\Phi_{H})$, and on morphisms by ${\sf F}(f)=(f,f).$ Note also that ${\sf F}(\mathbb{H}=(H_{1},H_{2}))=(H_{2},H_{2},\Gamma_{H_{1}},\Phi_{H})$ always satisfies the property \eqref{condinterw}:
\[\mu_{H}^{2}=\mu_{H}^{2}\circ \Psi,\]
where $\Psi=(\Gamma_{H_{1}}\otimes\Phi_{H})\circ(H\otimes c_{H,H}\otimes H)\circ(\delta_{H}\otimes\delta_{H})$ in this situation.

We will sometimes abuse notation by writing ${\sf F}$ for both the functor defined above and its restriction to ${\sf cocHBr}(A)$, being the latter restriction the one which provides the categorical isomorphism of Theorem \ref{isoHBrMP}.
\end{remark}
\section{Opposite brace triples and Hopf braces}

In \cite{VGRHac} the authors in collaboration with Fernández Vilaboa studied the conditions under which a triple $(H,\gamma_{H},T_{H})$, where $H$ is a Hopf algebra and $\gamma_{H}\colon H\otimes H\rightarrow H$, $T_{H}\colon H\rightarrow H$ are morphisms in ${\sf C}$, gives rise to a new Hopf algebra structure $H_{{\sf BT}}$ in such a way that the pair $\mathbb{H}_{{\sf BT}}=(H,H_{{\sf BT}})$ defines a Hopf brace in ${\sf C}$. These conditions are encoded in a new category, the category of brace triples. It is further shown that every \texttt{s}-Hopf brace (a subcategory of {\sf HBr} generalizing cocommutative Hopf braces) arises from a brace triple, and hence the two categories are isomorphic. It is worth noting that the deformed Hopf algebra $H_{{\sf BT}}$ occupies the second component of the pair of Hopf algebras that generates the Hopf brace $\mathbb{H}_{{\sf BT}}$.

Bearing in mind the above considerations and the asymmetry between the roles of $H_{1}$ and $H_{2}$ in a Hopf brace structure, one may formulate the inverse problem: given a Hopf algebra $A$ together with a pair of morphisms as above, how can one deform the structure of $A$ so as to obtain a new Hopf algebra $\widetilde{A}$ such that $(\widetilde{A},A)$ forms a Hopf brace in ${\sf C}$? In contrast with the work developed in \cite{VGRHac}, $\widetilde{A}$ here occupies the first Hopf algebra involved in the new Hopf brace. These conditions lead to a new category, the category of opposite brace triples that will be introduced hereunder.

Furthermore, we will establish categorical links between this new category and not only the category of Hopf braces, but also that of matched pairs. Such a connection is to be expected, since Theorem \ref{isoHBrMP} shows that the functor ${\sf G}$ defining the categorical isomorphism produces a Hopf algebra $\overline{A}$ functioning as $H_{1}$ within the Hopf brace $\overline{\mathbb{A}}$.

Thus, opposite brace triples are defined as follows.
\begin{definition}\label{opBTdef}
	Let $A=(A,\eta_{A},\mu_{A},\varepsilon_{A},\delta_{A},\lambda_{A})$ be a Hopf algebra in ${\sf C}$ and let $m_{A}\colon A\otimes A\rightarrow A$ and $u_{A}\colon A \rightarrow A$ be morphisms in ${\sf C}$. We will say that $(A,m_{A},u_{A})$ is an opposite brace triple in ${\sf C}$ if the following conditions hold:
	\begin{itemize}
		\item[(i)] $m_{A}$ is a coalgebra morphism,
		\item[(ii)] $m_{A}\circ(\eta_{A}\otimes A)=\mathrm{id}_{A}$,
		\item[(iii)] $m_{A}\circ(A\otimes\eta_{A})=\varepsilon_{A}\otimes\eta_{A}$,
		\item[(iv)] $m_{A}\circ(A\otimes m_{A})=m_{A}\circ(\mu_{A}^{\textnormal{op}}\otimes A)$,
		\item[(v)] $m_{A}\circ(A\otimes\widetilde{\mu}_{A})=\widetilde{\mu}_{A}\circ(m_{A}\otimes m_{A})\circ(A\otimes c_{A,A}\otimes A)\circ(\delta_{A}\otimes A\otimes A)$,
		\item[(vi)] $u_{A}$ is a coalgebra morphism,
		\item[(vii)] $u_{A}\circ u_{A}=\mathrm{id}_{A}$,
		\item[(viii)] $m_{A}\circ(A\otimes u_{A})\circ\delta_{A}=\lambda_{A}$,
	\end{itemize}
	where
	\begin{gather}\label{muopdef}
		\widetilde{\mu}_{A}\coloneqq \mu_{A}\circ(A\otimes m_{A})\circ(\delta_{A}\otimes A).
	\end{gather}
	
	These objects give rise to a category whose morphisms are defined as follows: if $(A,m_{A},u_{A})$ and $(B,m_{B},u_{B})$ are opposite brace triples, then $$f\colon (A,m_{A},u_{A})\rightarrow (B,m_{B},u_{B})$$ is a morphism of opposite brace triples if $f\colon A\rightarrow B$ is a morphism of Hopf algebras in ${\sf C}$ and the condition 
	\begin{gather}\label{opBTmorcond}
		f\circ m_{A}=m_{B}\circ(f\otimes f)
	\end{gather}
	holds. This category will be denoted by ${\sf opBT}.$
	
	In case that the underlying Hopf algebra $A$ is cocommutative, then $(A,m_{A},u_{A})$ is said to be a cocommutative opposite brace triple. Cocommutative opposite brace triples form a full subcategory of ${\sf opBT}$ which we will denote by ${\sf coc}\textnormal{-}{\sf opBT}$.
\end{definition}
\begin{remark}\label{opjust}
	In the case that $(A,m_{A},u_{A})$ is a cocommutative opposite brace triple, conditions (ii) and (iv) in Definition \ref{opBTdef} mean that $(A,m_{A})$ is a left $A^{{\rm op}}$-module in ${\sf C}$. This fact provides the justification for the name assigned to the previous category, employing the adjective ``opposite''. 
	
	In a more general setting, without assuming cocommutativity, if $(A,m_{A},u_{A})$ is an opposite brace triple such that $\lambda_{A}$ is an isomorphism in ${\sf C}$, then $(A,m_{A})$ is a left $A^{{\rm op}}$-module in $\overline{{\sf C}}$.
\end{remark}

Our starting point is the construction of a new Hopf algebra structure $\widetilde{A}$ derived from an opposite brace triple $(A,m_{A},u_{A})$.
\begin{theorem}\label{conswidetildeA}
	If $(A,m_{A},u_{A})$ is a cocommutative opposite brace triple, then 
	\[\widetilde{A}=(A,\eta_{A},\widetilde{\mu}_{A},\varepsilon_{A},\delta_{A},u_{A})\]
	is a Hopf algebra in ${\sf C}$.
\end{theorem}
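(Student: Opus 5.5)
We verify the Hopf algebra axioms for $\widetilde{A}$ in turn, in the order unit, associativity, bialgebra compatibility, antipode. Cocommutativity of $A$ will be used throughout, together with two facts recalled earlier: $c_{A,A}$ is involutive by \eqref{ccb}, and $\mu_{A}^{\rm op}=\mu_{A}\circ c_{A,A}$ is again an associative product.

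\textbf{Unit.} The unit property of $\widetilde{\mu}_{A}$ comes first.
\begin{itemize}
\item[] Left unit: $\widetilde{\mu}_{A}\circ(\eta_{A}\otimes A)=\mu_{A}\circ(\eta_{A}\otimes m_{A}\circ(\eta_{A}\otimes A))=\mathrm{id}_{A}$, using $\delta_{A}\circ\eta_{A}=\eta_{A}\otimes\eta_{A}$ and condition (ii).
\item[] Right unit: by condition (iii), $\widetilde{\mu}_{A}\circ(A\otimes\eta_{A})=\mu_{A}\circ(A\otimes\eta_{A})\circ(A\otimes\varepsilon_{A})\circ\delta_{A}=\mathrm{id}_{A}$.
\end{itemize}

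\textbf{Associativity.} Expand $\widetilde{\mu}_{A}\circ(A\otimes\widetilde{\mu}_{A})=\mu_{A}\circ(A\otimes(m_{A}\circ(A\otimes\widetilde{\mu}_{A})))\circ(\delta_{A}\otimes A\otimes A)$. Condition (v) rewrites the inner term, which produces
\[\mu_{A}\circ(\mu_{A}\otimes A)\circ(\ldots\otimes m_{A}\circ(A\otimes m_{A})\ldots)\]
after using coassociativity and the fact that $m_{A}$ is a coalgebra morphism (condition (i)). The inner $m_{A}\circ(A\otimes m_{A})$ becomes $m_{A}\circ(\mu_{A}^{\rm op}\otimes A)$ by condition (iv).

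The other side is $\widetilde{\mu}_{A}\circ(\widetilde{\mu}_{A}\otimes A)=\mu_{A}\circ(\widetilde{\mu}_{A}\otimes m_{A})\circ(\delta_{A}\circ\widetilde{\mu}_{A}\otimes A)$. Here one needs $\delta_{A}\circ\widetilde{\mu}_{A}=(\widetilde{\mu}_{A}\otimes\widetilde{\mu}_{A})\circ\delta_{A\otimes A}$, so I would prove that coalgebra compatibility first. It follows because $\mu_{A}$ and $m_{A}$ are coalgebra morphisms, $\delta_{A}$ is cocommutative, and $c$ is natural; the braidings rearrange since $c_{A,A}$ is symmetric on $A$. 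The counit compatibility is immediate from $\varepsilon_{A}\circ m_{A}=\varepsilon_{A}\otimes\varepsilon_{A}$. Both sides of the associativity identity should then reduce to the same expression with $m_{A}\circ(\mu_{A}^{\rm op}\otimes A)$ applied to copies of the first two variables. Matching the braidings, where the ``op'' absorbs a $c_{A,A}$ coming from the cocommutativity rearrangement, is the main obstacle. I expect a careful diagram chase using naturality of $c$ and \eqref{ccb}. Having shown that $\widetilde{\mu}_{A}$ is a coalgebra morphism, and using $\delta_{A}\circ\eta_{A}$, the unit is a coalgebra morphism, so $\widetilde{A}$ is a bialgebra.

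\textbf{Antipode.} First, $\widetilde{\mu}_{A}\circ(A\otimes u_{A})\circ\delta_{A}=\mu_{A}\circ(A\otimes m_{A})\circ(\delta_{A}\otimes u_{A})\circ\delta_{A}$. Coassociativity and condition (viii) turn this into $\mu_{A}\circ(A\otimes\lambda_{A})\circ\delta_{A}=\varepsilon_{A}\otimes\eta_{A}$. For the other side, $u_{A}\ast\mathrm{id}_{A}$ in $\widetilde{\mu}_{A}$ equals $\mu_{A}\circ(u_{A}\otimes m_{A}\circ(u_{A}\otimes A))\circ(\delta_{A}\otimes A)\circ\delta_{A}$, using that $u_{A}$ is a coalgebra morphism (condition (vi)). Substituting $A\mapsto u_{A}(A)$ in (viii) and using (vii) together with cocommutativity gives $m_{A}\circ(u_{A}\otimes A)\circ\delta_{A}=\lambda_{A}\circ u_{A}$. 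Since $u_{A}$ is comultiplicative, the expression then becomes $(\mathrm{id}_{A}\ast\lambda_{A})\circ u_{A}=\varepsilon_{A}\otimes\eta_{A}$. Thus $u_{A}$ is a two-sided convolution inverse of $\mathrm{id}_{A}$ with respect to $\widetilde{\mu}_{A}$, which completes the verification.
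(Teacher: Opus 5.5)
Your proposal follows the paper's proof essentially step for step. The paper gets the units from (ii) and (iii) and proves that $\widetilde{\mu}_{A}$ is a coalgebra morphism before associativity. For associativity it uses (v), (i) and (iv). The braid-matching you left as an anticipated diagram chase is handled exactly as you expect: naturality of $c$ moves the braiding past $m_{A}$ and produces $c_{A,A}\circ c_{A,A}$, which \eqref{ccb} cancels, and cocommutativity finishes. The antipode comes from (viii) on one side and (vi), (vii) on the other. The only cosmetic difference is that the paper writes $u_{A}\,\widetilde{\ast}\,\mathrm{id}_{A}=(\mathrm{id}_{A}\,\widetilde{\ast}\,u_{A})\circ u_{A}$ directly, instead of first deriving $m_{A}\circ(u_{A}\otimes A)\circ\delta_{A}=\lambda_{A}\circ u_{A}$; this is the same precompose-by-$u_{A}$ trick applied one level down.
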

\begin{proof}
	Let us start the proof by showing that the triple $(A,\eta_{A},\widetilde{\mu}_{A})$ is an algebra in ${\sf C}$. The unit property follows by
	\begin{itemize}
		\itemindent=-27pt
		\item[] $\hspace{0.38cm} \widetilde{\mu}_{A}\circ(\eta_{A}\otimes A)$
		\item[] $=\mu_{A}\circ(\eta_{A}\otimes(m_{A}\circ(\eta_{A}\otimes A)))$ {\footnotesize (by \eqref{muopdef} and the condition of coalgebra morphism for $\eta_{A}$)}
		\item[] $=m_{A}\circ(\eta_{A}\otimes A)$ {\footnotesize (by the unit property)}
		\item[] $=\mathrm{id}_{A}$ {\footnotesize (by (ii) of Definition \ref{opBTdef}),} 
	\end{itemize}
	and 
	\begin{itemize}
		\itemindent=-27pt
		\item[] $\hspace{0.38cm} \widetilde{\mu}_{A}\circ(A\otimes\eta_{A})$
		\item[] $=\mu_{A}\circ(A\otimes(\eta_{A}\circ\varepsilon_{A}))\circ\delta_{A}$ {\footnotesize (by \eqref{muopdef} and (iii) of Definition \ref{opBTdef})}
		\item[] $=\mathrm{id}_{A}$ {\footnotesize (by the (co)unit property).}
	\end{itemize}
	
	In order to compute the associativity of $\widetilde{\mu}_{A}$ we will see first that $\widetilde{\mu}_{A}$ is a coalgebra morphism. The fact that $\varepsilon_{A}\circ\widetilde{\mu}_{A}=\varepsilon_{A}\otimes\varepsilon_{A}$ is straightforward by the fact that $\mu_{A}$ and $m_{A}$ preserve the counit of $A$. Moreover,
	\begin{itemize}
		\itemindent=-27pt
		\item[] $\hspace{0.38cm} \delta_{A}\circ\widetilde{\mu}_{A}$
		\item[] $=(\mu_{A}\otimes\mu_{A})\circ(A\otimes c_{A,A}\otimes A)\circ(\delta_{A}\otimes(\delta_{A}\circ m_{A}))\circ(\delta_{A}\otimes A)$ {\footnotesize (by the condition of coalgebra morphism}
		\item[] $\hspace{0.38cm}${\footnotesize for $\mu_{A}$)}
		\item[] $=(\mu_{A}\otimes\mu_{A})\circ(A\otimes c_{A,A}\otimes A)\circ (\delta_{A}\otimes((m_{A}\otimes m_{A})\circ(A\otimes c_{A,A}\otimes A)\circ(\delta_{A}\otimes\delta_{A})))\circ(\delta_{A}\otimes A)$ 
		\item[] $\hspace{0.38cm}${\footnotesize (by (i) of Definition \ref{opBTdef})}
		\item[] $=((\mu_{A}\circ(A\otimes m_{A}))\otimes(\mu_{A}\circ(A\otimes m_{A})))\circ(A\otimes((A\otimes c_{A,A})\circ((c_{A,A}\circ\delta_{A})\otimes A))\otimes A\otimes A)$
		\item[] $\hspace{0.38cm}\circ(\delta_{A}\otimes c_{A,A}\otimes A)\circ(\delta_{A}\otimes\delta_{A})$ {\footnotesize (by naturality of $c$ and the coassociativity of $\delta_{A}$)}
		\item[] $=((\mu_{A}\circ(A\otimes m_{A}))\otimes(\mu_{A}\circ(A\otimes m_{A})))\circ(A\otimes A\otimes((c_{A,A}\otimes A)\circ(A\otimes c_{A,A})\circ(\delta_{A}\otimes A))\otimes A)$
		\item[] $\hspace{0.38cm}\circ(((\delta_{A}\otimes A)\circ\delta_{A})\otimes\delta_{A})$ {\footnotesize (by the cocommutativity of $A$ and the coassociativity of $\delta_{A}$)}
		\item[] $=(\widetilde{\mu}_{A}\otimes\widetilde{\mu}_{A})\circ(A\otimes c_{A,A}\otimes A)\circ(\delta_{A}\otimes\delta_{A})$ {\footnotesize (by the naturality of $c$ and \eqref{muopdef}).}
	\end{itemize}
	
	As a consequence, the associativity of the product $\widetilde{\mu}_{A}$ follows from
	\begin{itemize}
		\itemindent=-27pt
		\item[] $\hspace{0.38cm} \widetilde{\mu}_{A}\circ(A\otimes\widetilde{\mu}_{A})$
		\item[] $=\mu_{A}\circ(A\otimes(\widetilde{\mu}_{A}\circ(m_{A}\otimes m_{A})\circ(A\otimes c_{A,A}\otimes A)\circ(\delta_{A}\otimes A\otimes A)))\circ(\delta_{A}\otimes A\otimes A)$ {\footnotesize (by \eqref{muopdef} and}
		\item[] $\hspace{0.38cm}${\footnotesize (v) of Definition \ref{opBTdef})}
		\item[] $=\mu_{A}\circ(A\otimes (\mu_{A}\circ (A\otimes m_{A})\circ (((m_{A}\otimes m_{A})\circ(A\otimes c_{A,A}\otimes A)\circ(\delta_{A}\otimes \delta_{A}))\otimes m_{A})$
		\item[] $\hspace{0.38cm}\circ (A\otimes c_{A,A}\otimes A)\circ(\delta_{A}\otimes A\otimes A)))\circ (\delta_{A}\otimes A\otimes A)$ {\footnotesize (by \eqref{muopdef} and (i) of Definition \ref{opBTdef})}
		\item[] $=\mu_{A}\circ (\widetilde{\mu}_{A}\otimes(m_{A}\circ((\mu_{A}\circ c_{A,A}\circ (m_{A}\otimes A)\circ(A\otimes c_{A,A})\circ (\delta_{A}\otimes A))\otimes A)))$
		\item[] $\hspace{0.38cm}\circ (((A\otimes c_{A,A}\otimes A)\circ(\delta_{A}\otimes\delta_{A}))\otimes A)$ {\footnotesize (by the naturality of $c$, the coassociativity of $\delta_{A}$, the associativity}
		\item[] $\hspace{0.38cm}${\footnotesize of $\mu_{A}$, \eqref{muopdef} and (iv) of Definition \ref{opBTdef})}
		\item[] $=\mu_{A}\circ (\widetilde{\mu}_{A}\otimes(m_{A}\circ((\mu_{A}\circ (A\otimes m_{A})\circ(c_{A,A}\otimes A)\circ(A\otimes (c_{A,A}\circ c_{A,A})) \circ (\delta_{A}\otimes A))\otimes A)))$
		\item[] $\hspace{0.38cm}\circ (((A\otimes c_{A,A}\otimes A)\circ(\delta_{A}\otimes\delta_{A}))\otimes A)$ {\footnotesize (by the naturality of $c$)}
		\item[] $=\mu_{A}\circ (\widetilde{\mu}_{A}\otimes(m_{A}\circ((\mu_{A}\circ (A\otimes m_{A})\circ((c_{A,A}\circ\delta_{A})\otimes A))\otimes A)))\circ (((A\otimes c_{A,A}\otimes A)\circ(\delta_{A}\otimes\delta_{A}))\otimes A)$
		\item[]$\hspace{0.38cm}${\footnotesize(by \eqref{ccb})}
		\item[] $=\mu_{A}\circ(A\otimes m_{A})\circ(((\widetilde{\mu}_{A}\otimes\widetilde{\mu}_{A})\circ(A\otimes c_{A,A}\otimes A)\circ(\delta_{A}\otimes\delta_{A}))\otimes A)$ {\footnotesize (by the cocommutativity of $A$}
		\item[] $\hspace{0.38cm}${\footnotesize and \eqref{muopdef})}
		\item[] $=\widetilde{\mu}_{A}\circ(\widetilde{\mu}_{A}\otimes A)$ {\footnotesize (by the condition of coalgebra morphism for $\widetilde{\mu}_{A}$ and \eqref{muopdef}).}
	\end{itemize}
	
	Then, we conclude that $(A,\eta_{A},\widetilde{\mu}_{A})$ is an algebra in ${\sf C}$, and also, due to the fact that $\widetilde{\mu}_{A}$ is a coalgebra morphism, $(A,\eta_{A},\widetilde{\mu}_{A},\varepsilon_{A},\delta_{A})$ is a bialgebra in ${\sf C}$. 
	
	Therefore, to show that $\widetilde{A}$ is a Hopf algebra in ${\sf C}$ it only remains to prove that $u_{A}$ satisfies \eqref{antipode}. We will denote by $\widetilde{\ast}$ the convolution product in $\operatorname{Hom}_{{\sf C}}(A,\widetilde{A})$. On the one hand,
	\begin{itemize}
		\itemindent=-27pt
		\item[] $\hspace{0.38cm} \mathrm{id}_{A}\,\widetilde{\ast}\, u_{A}$
		\item[] $=\mu_{A}\circ(A\otimes (m_{A}\circ(A\otimes u_{A})\circ\delta_{A}))\circ\delta_{A}$ {\footnotesize (by \eqref{muopdef} and coassociativity of $\delta_{A}$)}
		\item[] $=\mathrm{id}_{A}\ast\lambda_{A}$ {\footnotesize (by (viii) of Definition \ref{opBTdef})}
		\item[] $=\varepsilon_{A}\otimes\eta_{A}$ {\footnotesize (by \eqref{antipode} for the Hopf algebra $A$)}
	\end{itemize}
	and, on the other hand,
	\begin{itemize}
		\itemindent=-27pt
		\item[] $\hspace{0.38cm}u_{A}\,\widetilde{\ast}\, \mathrm{id}_{A}$
		\item[] $=\widetilde{\mu}_{A}\circ(u_{A}\otimes(u_{A}\circ u_{A}))\circ\delta_{A}$ {\footnotesize (by (vii) of Definition \ref{opBTdef})}
		\item[] $=(\mathrm{id}_{A}\,\widetilde{\ast}\, u_{A})\circ u_{A}$ {\footnotesize (by (vi) of Definition \ref{opBTdef})}
		\item[] $=\eta_{A}\circ\varepsilon_{A}\circ u_{A}$ {\footnotesize (by the equality above: $\mathrm{id}_{A}\,\widetilde{\ast}\, u_{A}=\varepsilon_{A}\otimes\eta_{A}=\eta_{A}\circ\varepsilon_{A}$)}
		\item[] $=\varepsilon_{A}\otimes\eta_{A}$ {\footnotesize (by (vi) of Definition \ref{opBTdef}),}
	\end{itemize}
	which implies that $\widetilde{A}$ is a Hopf algebra in ${\sf C}$.
\end{proof}
\begin{corollary}
	If $(A,m_{A},u_{A})$ is a cocommutative opposite brace triple, then $(\widetilde{A},m_{A})$ is a left $A^{{\rm op}}$-module algebra-coalgebra in ${\sf C}$.
\end{corollary}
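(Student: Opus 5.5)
The corollary reduces to checking the four defining identities of a left module algebra–coalgebra for $(\widetilde{A},m_{A})$ over $A^{{\rm op}}$. Each one is supplied either directly by Definition \ref{opBTdef} or by Theorem \ref{conswidetildeA}. Throughout, $A$ is cocommutative, so $c_{A,A}$ is involutive by \eqref{ccb} and $A^{{\rm op}}=(A,\eta_{A},\mu_{A}\circ c_{A,A},\varepsilon_{A},\delta_{A},\lambda_{A})$ is a Hopf algebra in ${\sf C}$. Moreover, $\widetilde{A}$ is a Hopf algebra (hence an algebra) by Theorem \ref{conswidetildeA}.

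\textbf{Module structure.} This is exactly Remark \ref{opjust}. Condition (ii) of Definition \ref{opBTdef} is \eqref{actioneta}, and condition (iv) is \eqref{actionprod} with product $\mu_{A}^{{\rm op}}$. So $(A,m_{A})$ is a left $A^{{\rm op}}$-module.

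\textbf{Coalgebra part.} The coalgebra underlying $\widetilde{A}$ is $(A,\varepsilon_{A},\delta_{A})$, which coincides with the coalgebra of $A^{{\rm op}}$. Hence \eqref{Hmodcoalg1} and \eqref{Hmodcoalg2} together say precisely that $m_{A}$ is a coalgebra morphism $A\otimes A\rightarrow A$, which is condition (i).

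\textbf{Algebra part, unit.} The unit of $\widetilde{A}$ is $\eta_{A}$, so \eqref{Hmodalg1} reads $m_{A}\circ(A\otimes\eta_{A})=\varepsilon_{A}\otimes\eta_{A}$. This is condition (iii).

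\textbf{Algebra part, product.} Condition \eqref{Hmodalg2} for the product $\widetilde{\mu}_{A}$ requires
\[m_{A}\circ(A\otimes\widetilde{\mu}_{A})=\widetilde{\mu}_{A}\circ(m_{A}\otimes m_{A})\circ(A\otimes c_{A,A}\otimes A)\circ(\delta_{A}\otimes A\otimes A).\]
Here the diagonal action is built with the coproduct of $A^{{\rm op}}$, which is $\delta_{A}$, and the braiding $c_{A,A}$. This is verbatim condition (v) of Definition \ref{opBTdef}.

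The only point needing care is bookkeeping: one must confirm that the structures entering the module-algebra and module-coalgebra identities (counit, coproduct, braiding) are those of $A^{{\rm op}}$, which agree with those of $A$. One must also confirm that the product appearing is $\widetilde{\mu}_{A}$, so that Theorem \ref{conswidetildeA} is what guarantees $\widetilde{A}$ is a genuine algebra. No further computation is expected.
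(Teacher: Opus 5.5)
Your proposal is correct and follows essentially the same route as the paper. The paper also uses Remark~\ref{opjust} for the $A^{\rm op}$-module structure, Theorem~\ref{conswidetildeA} for $\widetilde{A}$ being a Hopf algebra, and axioms (i), (iii) and (v) of Definition~\ref{opBTdef} for the coalgebra, unit and product compatibilities, with your check that $A^{\rm op}$ shares its counit, coproduct and braiding with $A$ simply made explicit.
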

\begin{proof}
	By Remark \ref{opjust}, this proof is a direct consequence of the previous theorem and axioms (i), (iii) and (v) in Definition \ref{opBTdef}.
\end{proof}

Hence, given a cocommutative opposite brace triple $(A,m_{A},u_{A})$, Theorem \ref{conswidetildeA} constructs a new Hopf algebra in ${\sf C}$, $\widetilde{A}$, whose underlying coalgebra is the same as that of $A$. Therefore, one may naturally ask whether the pair $(\widetilde{A},A)$ is a Hopf brace in ${\sf C}$. The following results give a positive answer to the previous question.

\begin{lemma}
	If $(A,m_{A},u_{A})$ is a cocommutative opposite brace triple, then the equality
	\begin{gather}\label{muAitomuop}
		\mu_{A}=\widetilde{\mu}_{A}\circ(A\otimes(m_{A}\circ (\lambda_{A}\otimes A)))\circ (\delta_{A}\otimes A)
	\end{gather}
	holds.
\end{lemma}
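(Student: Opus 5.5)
The plan is to expand the right-hand side of \eqref{muAitomuop} using the definition \eqref{muopdef} of $\widetilde{\mu}_{A}$, and then collapse it with the module property (iv) of Definition \ref{opBTdef} and the antipode identity for $A$. Substituting \eqref{muopdef}, the right-hand side becomes
\[
\mu_{A}\circ(A\otimes (m_{A}\circ(A\otimes (m_{A}\circ(\lambda_{A}\otimes A)))))\circ(((A\otimes\delta_{A})\circ\delta_{A})\otimes A),
\]
where coassociativity of $\delta_{A}$ lets us group the last two tensorands of the iterated coproduct. In element-free terms, the first copy of $a$ is multiplied in $A$ by $m_{A}(a_{(2)}, m_{A}(\lambda_{A}(a_{(3)}),b))$.

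Next, apply (iv) of Definition \ref{opBTdef}, $m_{A}\circ(A\otimes m_{A})=m_{A}\circ(\mu_{A}^{\rm op}\otimes A)$. With $\mu_{A}^{\rm op}=\mu_{A}\circ c_{A,A}$, the inner term becomes
\[
m_{A}\circ((\mu_{A}\circ c_{A,A}\circ(A\otimes\lambda_{A})\circ\delta_{A})\otimes A).
\]
By naturality of $c$ and cocommutativity $c_{A,A}\circ\delta_{A}=\delta_{A}$, we have $\mu_{A}\circ c_{A,A}\circ(A\otimes\lambda_{A})\circ\delta_{A}=\mu_{A}\circ(\lambda_{A}\otimes A)\circ\delta_{A}=\lambda_{A}\ast{\rm id}_{A}$, which equals $\varepsilon_{A}\otimes\eta_{A}$ by \eqref{antipode}.

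Then (ii) of Definition \ref{opBTdef} gives $m_{A}\circ(\eta_{A}\otimes A)={\rm id}_{A}$. The counit property removes the spurious $\varepsilon_{A}$, so the whole expression reduces to $\mu_{A}\circ(A\otimes A)={\rm id}$-composition, i.e. exactly $\mu_{A}$.

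The only delicate step is the coassociativity bookkeeping. After substituting \eqref{muopdef}, one must make sure that the two arguments of the composite $m_{A}\circ(A\otimes m_{A})$ really are $a_{(2)}$ and $\lambda_{A}(a_{(3)})$ coming from a single $\delta_{A}$ applied to one leg. That is what allows (iv) and then \eqref{antipode} to be applied to the pair $(a_{(2)},a_{(3)})$. No braiding crossings arise there apart from the one inside $\mu_{A}^{\rm op}$, and that one is absorbed by cocommutativity (one could also invoke \eqref{ccb}, but it is not needed). I therefore expect the computation to be a short chain of equalities with no real obstacle.
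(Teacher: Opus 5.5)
Your proposal is correct and follows the paper's proof step for step. You expand $\widetilde{\mu}_{A}$ via \eqref{muopdef} together with coassociativity, apply (iv) of Definition \ref{opBTdef}, use naturality of $c$ and cocommutativity to reach $\lambda_{A}\ast{\rm id}_{A}$, and finish with \eqref{antipode}, (ii) and the counit property; as you note, \eqref{ccb} is not needed, and the paper does not use it here either.
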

\begin{proof}
	The equality above results as follows:
	\begin{itemize}
		\itemindent=-27pt
		\item[] $\hspace{0.38cm}\widetilde{\mu}_{A}\circ(A\otimes(m_{A}\circ (\lambda_{A}\otimes A)))\circ (\delta_{A}\otimes A)$
		\item[] $=\mu_{A}\circ(A\otimes(m_{A}\circ(A\otimes(m_{A}\circ(\lambda_{A}\otimes A)))\circ (\delta_{A}\otimes A)))\circ(\delta_{A}\otimes A)$ {\footnotesize (by \eqref{muopdef} and the coassociativity}
		\item[] $\hspace{0.38cm}${\footnotesize of $\delta_{A}$)}
		\item[] $=\mu_{A}\circ(A\otimes(m_{A}\circ((\mu_{A}\circ c_{A,A}\circ (A\otimes\lambda_{A})\circ\delta_{A})\otimes A)))\circ(\delta_{A}\otimes A)$ {\footnotesize (by (iv) of Definition~\ref{opBTdef})}
		\item[] $=\mu_{A}\circ(A\otimes(m_{A}\circ((\lambda_{A}\ast\mathrm{id}_{A})\otimes A)))\circ(\delta_{A}\otimes A)$ {\footnotesize (by the naturality of $c$ and the cocommutativity}
		\item[] $\hspace{0.38cm}${\footnotesize of $\delta_{A}$)}
		\item[] $=\mu_{A}\circ(A\otimes(m_{A}\circ((\eta_{A}\circ\varepsilon_{A})\otimes A)))\circ(\delta_{A}\otimes A)$ {\footnotesize (by \eqref{antipode} for the Hopf algebra $A$)}
		\item[] $=\mu_{A}$ {\footnotesize (by the counit property and (ii) of Definition \ref{opBTdef}).}\qedhere
	\end{itemize}
\end{proof}
\begin{theorem}\label{HBrwidetildeA}
	If $(A,m_{A},u_{A})$ is a cocommutative opposite brace triple, then
	\[\widetilde{\mathbb{A}}=(\widetilde{A},A)\]
	is a cocommutative Hopf brace in ${\sf C}$.
\end{theorem}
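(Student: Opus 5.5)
The plan is to verify the three requirements of a Hopf brace directly for the pair $H_{1}=\widetilde{A}$, $H_{2}=A$. Requirements (i) and (ii) are already available. By Theorem \ref{conswidetildeA}, $\widetilde{A}$ is a Hopf algebra with antipode $u_{A}$, and $A$ is a Hopf algebra by hypothesis. Both share the coalgebra $(A,\varepsilon_{A},\delta_{A})$ and the unit $\eta_{A}$, so the resulting brace is cocommutative because $A$ is. Everything therefore reduces to the compatibility condition \eqref{compatHbrace}, which here reads
\[
\mu_{A}\circ(A\otimes\widetilde{\mu}_{A})=\widetilde{\mu}_{A}\circ(\mu_{A}\otimes\Gamma_{\widetilde{A}})\circ(A\otimes c_{A,A}\otimes A)\circ(\delta_{A}\otimes A\otimes A),
\]
with $\Gamma_{\widetilde{A}}=\widetilde{\mu}_{A}\circ(u_{A}\otimes\mu_{A})\circ(\delta_{A}\otimes A)$.

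\emph{Step 1: identify $\Gamma_{\widetilde{A}}$.} I will show that
\[
\Gamma_{\widetilde{A}}=m_{A}\circ(\lambda_{A}\otimes A).
\]
Substitute \eqref{muAitomuop} for the factor $\mu_{A}$ inside $\Gamma_{\widetilde{A}}$. This rewrites $\Gamma_{\widetilde{A}}$ as $\widetilde{\mu}_{A}\circ(u_{A}\otimes\widetilde{\mu}_{A})$ applied to three coproduct legs of the first argument, with the last leg feeding $m_{A}\circ(\lambda_{A}\otimes A)$. Associativity of $\widetilde{\mu}_{A}$ and coassociativity then collapse the first two legs to $u_{A}\,\widetilde{\ast}\,\mathrm{id}_{A}=\varepsilon_{A}\otimes\eta_{A}$. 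The unit and counit properties finish the computation.

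\emph{Step 2: expand the left-hand side.} By \eqref{muAitomuop},
\[
\mu_{A}\circ(A\otimes\widetilde{\mu}_{A})=\widetilde{\mu}_{A}\circ\bigl(A\otimes(m_{A}\circ(\lambda_{A}\otimes\widetilde{\mu}_{A}))\bigr)\circ(\delta_{A}\otimes A\otimes A).
\]
Axiom (v) of Definition \ref{opBTdef} splits $m_{A}\circ(\lambda_{A}\otimes\widetilde{\mu}_{A})$ as $\widetilde{\mu}_{A}\circ(m_{A}\otimes m_{A})$, taken after $(\delta_{A}\circ\lambda_{A})$ and a braiding. Since $A$ is cocommutative, \eqref{a-antip2} gives $\delta_{A}\circ\lambda_{A}=(\lambda_{A}\otimes\lambda_{A})\circ\delta_{A}$. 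Using associativity of $\widetilde{\mu}_{A}$, the left-hand side becomes the triple $\widetilde{\mu}_{A}$-product of $a_{1}$, $m_{A}(\lambda_{A}(a_{2}),b)$ and $m_{A}(\lambda_{A}(a_{3}),c)$, with the appropriate braidings.

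\emph{Step 3: regroup and identify with the right-hand side.} Reapplying \eqref{muAitomuop} to the first two factors yields $\mu_{A}(a_{1}\otimes b)$, and by Step 1 the third factor is $\Gamma_{\widetilde{A}}(a_{2}\otimes c)$. This is exactly the right-hand side of the compatibility condition.

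The main obstacle is the braid bookkeeping in Steps 2–3. The coproduct legs of $a$ must be moved past $b$ and past one another, and the string diagrams must be reconciled with the single $c_{A,A}$ in \eqref{compatHbrace}. For this I will use naturality of $c$, cocommutativity (to reorder the legs of $\delta_{A}$), and the involutivity \eqref{ccb} (to cancel double crossings), exactly as in the proof of associativity in Theorem \ref{conswidetildeA}.
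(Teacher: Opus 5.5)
Your proposal is correct. Step 1 is exactly the paper's derivation of \eqref{gammaBTop}; the difference lies in how you verify \eqref{compatHbrace}.

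The paper starts from the right-hand side. It unfolds $\widetilde{\mu}_{A}$ through \eqref{muopdef}, uses that $\mu_{A}$ is a coalgebra morphism, and merges $m_{A}\circ(A\otimes m_{A})$ by axiom (iv). It then removes a double crossing with \eqref{ccb} and cocommutativity, and cancels $\lambda_{A}\ast\mathrm{id}_{A}$ to arrive at $\mu_{A}\circ(\mu_{A}\otimes m_{A})\circ(A\otimes\delta_{A}\otimes A)=\mu_{A}\circ(A\otimes\widetilde{\mu}_{A})$.

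You start from the left-hand side and use axiom (v) instead of (iv). Precomposing (v) with $\lambda_{A}$ and using $\delta_{A}\circ\lambda_{A}=(\lambda_{A}\otimes\lambda_{A})\circ\delta_{A}$ shows that $m_{A}\circ(\lambda_{A}\otimes A)=\Gamma_{\widetilde{A}}$ is multiplicative for $\widetilde{\mu}_{A}$. Then \eqref{muAitomuop}, which is the analogue of \eqref{mu2-exp} for the pair $(\widetilde{A},A)$, together with associativity of $\widetilde{\mu}_{A}$, yields the compatibility.

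This is the same mechanism that underlies the characterization of Hopf braces by the left $H_{2}$-module algebra $(H_{1},\Gamma_{H_{1}})$. So your route is the more structural one: it exhibits the brace identity as axiom (v) transported through \eqref{muAitomuop}. It is also lighter than you fear. The only crossing that occurs is the one coming from axiom (v), and after naturality and coassociativity it is literally the $c_{A,A}$ of \eqref{compatHbrace}. Hence no double crossings appear, and \eqref{ccb} is not needed in Steps 2--3; cocommutativity enters only through $\delta_{A}\circ\lambda_{A}=(\lambda_{A}\otimes\lambda_{A})\circ\delta_{A}$.

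The paper's final computation, by contrast, works directly with the bialgebra structure and antipode of $A$ and with axiom (iv). There, axiom (v) enters only indirectly, through the associativity of $\widetilde{\mu}_{A}$ used in Step 1.
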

\begin{proof}
	The proof is complete once we verify that \eqref{compatHbrace} holds for the pair $(\widetilde{A},A)$. Note first that
	\begin{gather}\label{gammaBTop}
		\Gamma_{\widetilde{A}}=m_{A}\circ(\lambda_{A}\otimes A).
	\end{gather}
	
	Indeed,
	\begin{itemize}
		\itemindent=-27pt
		\item[] $\hspace{0.38cm}\Gamma_{\widetilde{A}}$
		\item[] $=\widetilde{\mu}_{A}\circ(u_{A}\otimes\mu_{A})\circ(\delta_{A}\otimes A)$ {\footnotesize (by \eqref{def_GammaH1})}
		\item[] $=\widetilde{\mu}_{A}\circ(u_{A}\otimes(\widetilde{\mu}_{A}\circ(A\otimes (m_{A}\circ(\lambda_{A}\otimes A)))\circ(\delta_{A}\otimes A)))\circ(\delta_{A}\otimes A)$ {\footnotesize (by \eqref{muAitomuop})}
		\item[] $=\widetilde{\mu}_{A}\circ((u_{A}\,\widetilde{\ast}\, \mathrm{id}_{A})\otimes(m_{A}\circ(\lambda_{A}\otimes A)))\circ(\delta_{A}\otimes A)$ {\footnotesize (by the associativity of $\widetilde{\mu}_{A}$ and the coassociativity}
		\item[] $\hspace{0.38cm}${\footnotesize of $\delta_{A}$)}
		\item[] $=m_{A}\circ(\lambda_{A}\otimes A)$ {\footnotesize (by \eqref{antipode} for $\widetilde{A}$ and the (co)unit property).}
	\end{itemize}
	
	Therefore, 
	\begin{itemize}
		\itemindent=-27pt
		\item[] $\hspace{0.38cm}\widetilde{\mu}_{A}\circ(\mu_{A}\otimes \Gamma_{\widetilde{A}})\circ(A\otimes c_{A,A}\otimes A)\circ(\delta_{A}\otimes A\otimes A)$
		\item[] $=\mu_{A}\circ(A\otimes m_{A})\circ((\delta_{A}\circ\mu_{A})\otimes(m_{A}\circ(\lambda_{A}\otimes A)))\circ(A\otimes c_{A,A}\otimes A)\circ(\delta_{A}\otimes A\otimes A)$ 
		\item[] $\hspace{0.38cm}${\footnotesize (by \eqref{muopdef} and \eqref{gammaBTop})}
		\item[] $=\mu_{A}\circ(A\otimes m_{A})\circ(((\mu_{A}\otimes\mu_{A})\circ(A\otimes c_{A,A}\otimes A)\circ(\delta_{A}\otimes\delta_{A}))\otimes(m_{A}\circ(\lambda_{A}\otimes A)))\circ(A\otimes c_{A,A}\otimes A)$
		\item[] $\hspace{0.29cm}\circ(\delta_{A}\otimes A\otimes A)$ {\footnotesize (by the condition of coalgebra morphism for $\mu_{A}$)}
		\item[] $=\mu_{A}\circ(\mu_{A}\otimes (m_{A}\circ ((\mu_{A}\circ c_{A,A}\circ (\mu_{A}\otimes\lambda_{A})\circ(A\otimes c_{A,A})\circ(\delta_{A}\otimes A))\otimes A)))$
		\item[] $\hspace{0.38cm}\circ(((A\otimes c_{A,A}\otimes A)\circ(\delta_{A}\otimes\delta_{A}))\otimes A)$ {\footnotesize (by the naturality of $c$, the coassociativity of $\delta_{A}$ and (iv) of}
		\item[] $\hspace{0.38cm}${\footnotesize Definition \ref{opBTdef})}
		\item[] $=\mu_{A}\circ(\mu_{A}\otimes (m_{A}\circ ((\mu_{A}\circ ((\mu_{A}\circ(\lambda_{A}\otimes A)\circ c_{A,A})\otimes A)\circ(A\otimes (c_{A,A}\circ c_{A,A}))\circ(\delta_{A}\otimes A))\otimes A)))$
		\item[] $\hspace{0.38cm}\circ(((A\otimes c_{A,A}\otimes A)\circ(\delta_{A}\otimes\delta_{A}))\otimes A)$ {\footnotesize (by the naturality of $c$ and the associativity of $\mu_{A}$)}
		\item[] $=\mu_{A}\circ(\mu_{A}\otimes (m_{A}\circ ((\mu_{A}\circ ((\lambda_{A}\ast\mathrm{id}_{A})\otimes A))\otimes A)))\circ(((A\otimes c_{A,A}\otimes A)\circ(\delta_{A}\otimes\delta_{A}))\otimes A)$ 
		\item[] $\hspace{0.38cm}${\footnotesize (by \eqref{ccb} and cocommutativity of $\delta_{A}$)}
		\item[] $=\mu_{A}\circ(\mu_{A}\otimes m_{A})\circ(A\otimes\delta_{A}\otimes A)$ {\footnotesize (by \scalebox{0.85}{\eqref{antipode}} for $A$, the naturality of $c$ and the (co)unit property)}
		\item[] $=\mu_{A}\circ(A\otimes\widetilde{\mu}_{A})$ {\footnotesize (by the associativity of $\mu_{A}$ and \eqref{muopdef}),}
	\end{itemize}
	which implies that $\widetilde{\mathbb{A}}$ is a Hopf brace in ${\sf C}$.
\end{proof}

The previous theorem can be interpreted in a functorial way as follows: There exists a functor 
\[{\sf P}\colon {\sf coc}\textnormal{-}{\sf opBT}\longrightarrow{\sf cocHBr}\]
defined on objects by ${\sf P}((A,m_{A},u_{A}))=\widetilde{\mathbb{A}}=(\widetilde{A},A)$, and on morphisms by the identity because, if $f\colon (A,m_{A},u_{A})\rightarrow(B,m_{B},u_{B})\in{\sf coc}\textnormal{-}{\sf opBT}$, $f\colon \widetilde{A}\rightarrow\widetilde{B}$ is also a Hopf algebra morphism in ${\sf C}$. The previous claim follows once it is shown that $f$ preserves the deformed product:
\begin{gather}\label{fwidetildeprod}
	f\circ \widetilde{\mu}_{A}=\widetilde{\mu}_{B}\circ(f\otimes f).
\end{gather}
Indeed,
\begin{itemize}
	\itemindent=-27pt
	\item[] $\hspace{0.38cm}f\circ \widetilde{\mu}_{A}$
	\item[] $=\mu_{B}\circ(f\otimes(f\circ m_{A}))\circ(\delta_{A}\otimes A)$ {\footnotesize (by \eqref{muopdef} and the condition of algebra morphism for $f\colon A\rightarrow B$)}
	\item[] $=\mu_{B}\circ(B\otimes m_{B})\circ(((f\otimes f)\circ\delta_{A})\otimes f)$ {\footnotesize (by \eqref{opBTmorcond})}
	\item[] $=\widetilde{\mu}_{B}\circ(f\otimes f)$ {\footnotesize (by the condition of coalgebra morphism for $f$ and \eqref{muopdef}).}
\end{itemize} 

Furthermore, the following corollary arises as an immediate consequence of the preceding result.
\begin{corollary}
	If $(A,m_{A},u_{A})$ is a cocommutative opposite brace triple, then $$(\widetilde{A},m_{A}\circ(\lambda_{A}\otimes A))$$ is a left $A$-module algebra-coalgebra in ${\sf C}$.
\end{corollary}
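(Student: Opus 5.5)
The plan is to read this off Theorem \ref{HBrwidetildeA} together with the general facts about cocommutative Hopf braces recalled in the preliminaries. By Theorem \ref{HBrwidetildeA}, $\widetilde{\mathbb{A}}=(\widetilde{A},A)$ is a cocommutative Hopf brace in ${\sf C}$ whose first Hopf algebra is $H_{1}=\widetilde{A}$ and whose second is $H_{2}=A$.

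For any Hopf brace $\mathbb{H}=(H_{1},H_{2})$, the pair $(H_{1},\Gamma_{H_{1}})$ is a left $H_{2}$-module algebra \cite[Lemma 1.8]{AGV}. In the cocommutative case, $\Gamma_{H_{1}}$ is moreover a coalgebra morphism \cite[Lemma~2.5]{VGRRProj}, so $(H_{1},\Gamma_{H_{1}})$ is also a left $H_{2}$-module coalgebra. Applying both facts to $\widetilde{\mathbb{A}}$ shows that $(\widetilde{A},\Gamma_{\widetilde{A}})$ is a left $A$-module algebra-coalgebra.

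It then remains to identify the action. Equality \eqref{gammaBTop}, established in the proof of Theorem \ref{HBrwidetildeA}, gives $\Gamma_{\widetilde{A}}=m_{A}\circ(\lambda_{A}\otimes A)$. Substituting this yields the statement.

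There is essentially no obstacle: this is a direct corollary. The only point to check is that the cited results are available in the general braided setting rather than only for vector spaces. The module-algebra property is stated in the preliminaries for Hopf braces in ${\sf C}$, and the module-coalgebra property is quoted there for cocommutative Hopf braces in ${\sf C}$.

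If one preferred a self-contained check, each axiom can be verified directly:
\begin{itemize}
\item unitality follows from (ii) of Definition \ref{opBTdef} and \eqref{u-antip1};
\item the module associativity follows from (iv) of Definition \ref{opBTdef} combined with antimultiplicativity \eqref{a-antip1} of $\lambda_{A}$;
\item the coalgebra-morphism property follows from (i) of Definition \ref{opBTdef} and the fact that $\lambda_{A}$ is a coalgebra morphism under cocommutativity;
\item compatibility with $\eta_{A}$ follows from (iii) of Definition \ref{opBTdef};
\item compatibility with $\widetilde{\mu}_{A}$ follows from (v) of Definition \ref{opBTdef} after precomposing with $\lambda_{A}$ and using that $\lambda_{A}$ is comultiplicative.
\end{itemize}
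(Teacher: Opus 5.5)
Your proposal is correct and matches the paper's proof: you apply the fact that $(H_{1},\Gamma_{H_{1}})$ is a left $H_{2}$-module algebra-coalgebra for a cocommutative Hopf brace to $\widetilde{\mathbb{A}}$ from Theorem~\ref{HBrwidetildeA}, then substitute $\Gamma_{\widetilde{A}}=m_{A}\circ(\lambda_{A}\otimes A)$ from \eqref{gammaBTop}. Your optional direct check is also sound, though the $\eta_{A}$-compatibility step additionally uses \eqref{u-antip2}, and that check bypasses Theorem~\ref{HBrwidetildeA}, needing only that $\widetilde{A}$ is an algebra (Theorem~\ref{conswidetildeA}).
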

\begin{proof}
	It is enough to apply the fact that, given any cocommutative Hopf brace $\mathbb{H}=(H_{1},H_{2})$ in ${\sf C}$, $(H_{1},\Gamma_{H_{1}})$ is a left $H_{2}$-module algebra-coalgebra to the particular Hopf brace $\widetilde{\mathbb{A}}$ constructed in Theorem \ref{HBrwidetildeA} because, in this case, $\Gamma_{\widetilde{A}}=m_{A}\circ(\lambda_{A}\otimes A)$ as was shown in \eqref{gammaBTop}.
\end{proof}
\begin{corollary}
	If $f\colon (A,m_{A},u_{A})\rightarrow(B,m_{B},u_{B})$ is a morphism of cocommutative opposite brace triples in ${\sf C}$, then 
	\begin{gather}\label{fcommutesu}
		u_{B}\circ f=f\circ u_{A}.
	\end{gather}
\end{corollary}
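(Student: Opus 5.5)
The idea is to use the functor ${\sf P}$ built just above. If $f\colon (A,m_{A},u_{A})\rightarrow(B,m_{B},u_{B})$ is a morphism of cocommutative opposite brace triples, then \eqref{fwidetildeprod} shows that $f$ preserves the deformed products. Since $\widetilde{A}$ and $\widetilde{B}$ have the same unit, counit and coproduct as $A$ and $B$, and $f$ is a Hopf algebra morphism $A\rightarrow B$, $f$ also preserves those. Hence $f\colon\widetilde{A}\rightarrow\widetilde{B}$ is a bialgebra morphism between the Hopf algebras of Theorem \ref{conswidetildeA}. By \eqref{morant} it then commutes with their antipodes. Those antipodes are $u_{A}$ and $u_{B}$, so $u_{B}\circ f=f\circ u_{A}$, which is \eqref{fcommutesu}.

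For completeness, I would also note the standard convolution argument behind \eqref{morant}, to make clear that nothing beyond the bialgebra morphism property is used. Work in $\operatorname{Hom}_{{\sf C}}(A,\widetilde{B})$ with the convolution $\widetilde{\ast}$ built from $\delta_{A}$ and $\widetilde{\mu}_{B}$.

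First, $f\circ u_{A}$ is a left convolution inverse of $f$. Using \eqref{fwidetildeprod} and that $f$ preserves unit and counit,
\[
(f\circ u_{A})\,\widetilde{\ast}\, f=f\circ(u_{A}\,\widetilde{\ast}\,\mathrm{id}_{A})=f\circ\eta_{A}\circ\varepsilon_{A}=\varepsilon_{A}\otimes\eta_{B}.
\]
Second, $u_{B}\circ f$ is a right convolution inverse of $f$. Using that $f$ is comultiplicative,
\[
f\,\widetilde{\ast}\,(u_{B}\circ f)=(\mathrm{id}_{B}\,\widetilde{\ast}\,u_{B})\circ f=\eta_{B}\circ\varepsilon_{B}\circ f=\varepsilon_{A}\otimes\eta_{B}.
\]
In a monoid, a left inverse and a right inverse of the same element coincide, so $f\circ u_{A}=u_{B}\circ f$.

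No real obstacle is expected. The only point needing care is that the deformed product on the target is $\widetilde{\mu}_{B}$, so both identities must be computed in the convolution monoid $\operatorname{Hom}_{{\sf C}}(A,\widetilde{B})$ rather than $\operatorname{Hom}_{{\sf C}}(A,B)$. This is exactly why \eqref{fwidetildeprod} is needed, together with $u_{A}$ and $u_{B}$ being the antipodes of $\widetilde{A}$ and $\widetilde{B}$ by Theorem \ref{conswidetildeA}.
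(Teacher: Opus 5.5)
Your proposal is correct and follows essentially the paper's route: \eqref{fwidetildeprod} makes $f\colon\widetilde{A}\rightarrow\widetilde{B}$ a Hopf algebra morphism, and \eqref{morant} for the antipodes $u_{A}$ and $u_{B}$ then gives \eqref{fcommutesu}. Your additional convolution computation in $\operatorname{Hom}_{{\sf C}}(A,\widetilde{B})$ is a correct unpacking of \eqref{morant} rather than a different argument.
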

\begin{proof}
	In view of the fact that, as established in \eqref{fwidetildeprod}, $f\colon\widetilde{A}\rightarrow\widetilde{B}$ is a Hopf algebra morphism in ${\sf C}$, the equality \eqref{fcommutesu} follows immediately from \eqref{morant}.
\end{proof}

In order to obtain a categorical equivalence between ${\sf HBr}$ and ${\sf opBT}$ in the cocommutative setting, we need to show the existence of an inverse for the functor ${\sf P}$, which will be a consequence of the following theorem.
\begin{theorem}
	If $\mathbb{H}=(H_{1},H_{2})$ is a cocommutative Hopf brace in ${\sf C}$, then 
	\[(H_{2},m_{H_{2}}=\Gamma_{H_{1}}\circ(\lambda_{H}^{2}\otimes H),u_{H_{2}}=\lambda_{H}^{1})\]
	is a cocommutative opposite brace triple in ${\sf C}$.
\end{theorem}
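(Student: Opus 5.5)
We verify the eight axioms of Definition \ref{opBTdef} for $A=H_{2}$, $m=\Gamma_{H_{1}}\circ(\lambda_{H}^{2}\otimes H)$ and $u=\lambda_{H}^{1}$, using throughout that $(H_{1},\Gamma_{H_{1}})$ is a left $H_{2}$-module algebra-coalgebra (cocommutative case) and that $\lambda_{H}^{1},\lambda_{H}^{2}$ are involutive coalgebra morphisms. The first step is to identify the deformed product: by \eqref{muopdef} and \eqref{mu1-exp},
\[
\widetilde{\mu}_{H_{2}}=\mu_{H}^{2}\circ(H\otimes(\Gamma_{H_{1}}\circ(\lambda_{H}^{2}\otimes H)))\circ(\delta_{H}\otimes H)=\mu_{H}^{1}.
\]
This reduces axiom (v) to the statement that $\mu_{H}^{1}$ is $m$-compatible.

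Axioms (i), (vi), (vii) and (ii) are immediate.
\begin{itemize}
\item[(i)] $m$ is a composite of coalgebra morphisms: $\Gamma_{H_{1}}$ is one by \cite[Lemma 2.5]{VGRRProj}, and so is $\lambda_{H}^{2}\otimes H$ by cocommutativity.
\item[(vi)] $\lambda_{H}^{1}$ is a coalgebra morphism because $H_{1}$ is cocommutative.
\item[(vii)] This is \eqref{lambdasquareid} for $H_{1}$.
\item[(ii)] It follows from \eqref{u-antip1} and \eqref{actioneta}.
\item[(iii)] Since $\eta_{H}$ is the common unit, it follows from \eqref{Hmodalg1} and \eqref{u-antip2}.
\end{itemize}

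For (iv), apply \eqref{actionprod} to get $m\circ(H\otimes m)=\Gamma_{H_{1}}\circ((\mu_{H}^{2}\circ(\lambda_{H}^{2}\otimes\lambda_{H}^{2}))\otimes H)$. Then \eqref{a-antip1} gives $\mu_{H}^{2}\circ(\lambda_{H}^{2}\otimes\lambda_{H}^{2})=\lambda_{H}^{2}\circ\mu_{H}^{2}\circ c_{H,H}^{-1}$, and \eqref{ccb} lets us replace $c^{-1}$ by $c$, so the right side is $m\circ(\mu_{H}^{2,{\rm op}}\otimes H)$.

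For (viii), compute
\[
m\circ(H\otimes\lambda_{H}^{1})\circ\delta_{H}=\Gamma_{H_{1}}\circ(H\otimes\lambda_{H}^{1})\circ(\lambda_{H}^{2}\otimes H)\circ\delta_{H}.
\]
Apply \eqref{agv1}, which yields $\mu_{H}^{1}\circ((\lambda_{H}^{1}\circ\mu_{H}^{2})\otimes H)\circ(H\otimes c)\circ(\delta_{H}\otimes H)$ evaluated on $(\lambda_{H}^{2}\otimes H)\circ\delta_{H}$. Using coassociativity, cocommutativity and that $\lambda_{H}^{2}$ is a coalgebra morphism, the inner $\mu_{H}^{2}$ collapses via $\lambda_{H}^{2}\ast{\rm id}=\varepsilon\otimes\eta$. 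What remains is $\mu_{H}^{1}\circ(\lambda_{H}^{1}\eta\otimes\lambda_{H}^{2})\circ\delta$-type terms, which simplify to $\lambda_{H}^{2}$. The bookkeeping with braidings is the delicate part, and I expect to need \eqref{ccb} again.

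The main obstacle is (v), which now reads
\[
m\circ(H\otimes\mu_{H}^{1})=\mu_{H}^{1}\circ(m\otimes m)\circ(H\otimes c_{H,H}\otimes H)\circ(\delta_{H}\otimes H\otimes H).
\]
This is exactly the module-algebra property \eqref{Hmodalg2} of $(H_{1},\Gamma_{H_{1}})$ precomposed with $\lambda_{H}^{2}\otimes H\otimes H$. To convert it, I would use that $\lambda_{H}^{2}$ is a coalgebra morphism, so that $\delta_{H}\circ\lambda_{H}^{2}=(\lambda_{H}^{2}\otimes\lambda_{H}^{2})\circ\delta_{H}$ after using cocommutativity to absorb the braiding in \eqref{a-antip2}. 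Then naturality of $c$ moves $\lambda_{H}^{2}$ across, and the two sides match.
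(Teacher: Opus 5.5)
Your proposal is correct and follows the paper's proof essentially step for step. Both identify $\widetilde{\mu}_{H_{2}}=\mu_{H}^{1}$ via \eqref{mu1-exp} and check (i)--(iii), (vi), (vii) directly; (iv) goes through \eqref{actionprod}, \eqref{a-antip1} and \eqref{ccb}, (v) through \eqref{Hmodalg2} together with $\lambda_{H}^{2}$ being a coalgebra morphism, and (viii) through \eqref{agv1} and $\lambda_{H}^{2}\ast{\rm id}_{H}=\varepsilon_{H}\otimes\eta_{H}$. One small point: in (viii) you do not actually need \eqref{ccb}, since naturality of $c$ (plus \eqref{a-antip2}, or cocommutativity as you use it) already collapses the braidings.
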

\begin{proof}
	When $\mathbb{H}$ is cocommutative, the action $\Gamma_{H_{1}}$ is a coalgebra morphism. In addition, the cocommutativity of $\mathbb{H}$ implies that $H_{2}$ is a cocommutative Hopf algebra, and then, the antipode $\lambda_{H}^{2}$ is also a coalgebra morphism. Hence, $m_{H_{2}}$ is a coalgebra morphism because it is a composition of coalgebra morphisms, so condition (i) of Definition \ref{opBTdef} holds.
	
	Condition (ii) follows by \eqref{u-antip1} for $\lambda_{H}^{2}$ and \eqref{actioneta} for the left $H_{2}$-module $(H_{1},\Gamma_{H_{1}})$.
	
	Regarding condition (iii) of Definition \ref{opBTdef}, it is also straightforward by using \eqref{Hmodalg1} for the left $H_{2}$-module algebra $(H_{1},\Gamma_{H_{1}})$ and \eqref{u-antip2} for $\lambda_{H}^{2}.$
	
	Condition (iv) for $(H_{2},m_{H_{2}}=\Gamma_{H_{1}}\circ(\lambda_{H}^{2}\otimes H),u_{H_{2}}=\lambda_{H}^{1})$ results as follows:
	\begin{itemize}
		\itemindent=-27pt
		\item[] $\hspace{0.38cm}m_{H_{2}}\circ(H\otimes m_{H_{2}})$
		\item[]$=\Gamma_{H_{1}}\circ((\mu_{H}^{2}\circ (\lambda_{H}^{2}\otimes\lambda_{H}^{2}))\otimes H)$ {\footnotesize (by \eqref{actionprod} for $(H_{1},\Gamma_{H_{1}})$)}
		\item[]$=\Gamma_{H_{1}}\circ((\mu_{H}^{2}\circ (\lambda_{H}^{2}\otimes\lambda_{H}^{2})\circ c_{H,H}\circ c_{H,H})\otimes H)$ {\footnotesize (by cocommutativity of $\mathbb{H}$ and \eqref{ccb})}
		\item[]$=\Gamma_{H_{1}}\circ((\lambda_{H}^{2}\circ\mu_{H}^{2}\circ c_{H,H})\otimes H)$ {\footnotesize (by \eqref{a-antip1} for $\lambda_{H}^{2}$)}
		\item[]$=m_{H_{2}}\circ((\mu_{H}^{2})^{{\rm op}}\otimes H)$. 
	\end{itemize}
	
	In order to compute condition (v), firstly note that 
	\begin{gather}\label{muwidetildeH2}
		\widetilde{\mu}_{H_{2}}=\mu_{H}^{1}
	\end{gather}
	by equality \eqref{mu1-exp}. Therefore,
	\begin{itemize}
		\itemindent=-27pt
		\item[] $\hspace{0.38cm}m_{H_{2}}\circ(H\otimes\widetilde{\mu}_{H_{2}})$
		\item[]$=\Gamma_{H_{1}}\circ(\lambda_{H}^{2}\otimes\mu_{H}^{1})$ {\footnotesize (by definition of $m_{H_{2}}$ and \eqref{muwidetildeH2})}
		\item[]$=\mu_{H}^{1}\circ(\Gamma_{H_{1}}\otimes\Gamma_{H_{1}})\circ(H\otimes c_{H,H}\otimes H)\circ((\delta_{H}\circ\lambda_{H}^{2})\otimes H\otimes H)$ {\footnotesize(by \eqref{Hmodalg2} for $(H_{1},\Gamma_{H_{1}})$)}
		\item[]$=\mu_{H}^{1}\circ((\Gamma_{H_{1}}\circ(\lambda_{H}^{2}\otimes H))\otimes(\Gamma_{H_{1}}\circ(\lambda_{H}^{2}\otimes H)))\circ(H\otimes c_{H,H}\otimes H)\circ(\delta_{H}\otimes H\otimes H)$ {\footnotesize (by the}
		\item[]$\hspace{0.38cm}${\footnotesize  condition of coalgebra morphism for $\lambda_{H}^{2}$ under cocommutativity and naturality of $c$)}
		\item[] $=\widetilde{\mu}_{H_{2}}\circ(m_{H_{2}}\otimes m_{H_{2}})\circ (H\otimes c_{H,H}\otimes H)\circ(\delta_{H}\otimes H\otimes H)$ {\footnotesize (by definition of $m_{H_{2}}$ and \eqref{muwidetildeH2}).}
	\end{itemize}
	
	By the same argument used for $H_{2}$, since $\mathbb{H}$ is a cocommutative Hopf brace, it follows that $H_{1}$ is likewise cocommutative. Consequently, its antipode $\lambda_{H}^{1}$ is an involutive coalgebra morphism. Hence, axioms (vi) and (vii) of Definition \ref{opBTdef} are satisfied.
	
	To conclude, the remaining condition (viii) of Definition \ref{opBTdef} for the triple $(H_{2},m_{H_{2}}=\Gamma_{H_{1}}\circ(\lambda_{H}^{2}\otimes H),u_{H_{2}}=\lambda_{H}^{1})$ is obtained as follows:
	\begin{itemize}
		\itemindent=-27pt
		\item[] $\hspace{0.38cm}\Gamma_{H_{1}}\circ(\lambda_{H}^{2}\otimes\lambda_{H}^{1})\circ\delta_{H}$
		\item[] $=\mu_{H}^{1}\circ((\lambda_{H}^{1}\circ\mu_{H}^{2})\otimes H)\circ(H\otimes c_{H,H})\circ((\delta_{H}\circ\lambda_{H}^{2})\otimes H)\circ\delta_{H}$ {\footnotesize (by \eqref{agv1})}
		\item[] $=\mu_{H}^{1}\circ((\lambda_{H}^{1}\circ\mu_{H}^{2})\otimes H)\circ(H\otimes c_{H,H})\circ((c_{H,H}\circ(\lambda_{H}^{2}\otimes\lambda_{H}^{2})\circ\delta_{H})\otimes H)\circ\delta_{H}$ {\footnotesize (by \eqref{a-antip2} for $\lambda_{H}^{2}$)}
		\item[] $=\mu_{H}^{1}\circ(\lambda_{H}^{1}\otimes H)\circ c_{H,H}\circ(\lambda_{H}^{2}\otimes (\lambda_{H}^{2}\ast\mathrm{id}_{H}))\circ\delta_{H}$ {\footnotesize (by the coassociativity of $\delta_{H}$ and the naturality}
		\item[] $\hspace{0.38cm}${\footnotesize of $c$)}
		\item[] $=\lambda_{H}^{2}$ {\footnotesize (by \eqref{antipode} for $H_{2}$, the counit property, the naturality of $c$, \eqref{u-antip1} for $\lambda_{H}^{1}$ and the unit property).}\qedhere
	\end{itemize}
\end{proof}

Consequently, there exists a functor 
\[{\sf Q}\colon {\sf cocHBr}\longrightarrow{\sf coc}\textnormal{-}{\sf opBT}\]
acting on objects by ${\sf Q}(\mathbb{H})=(H_{2},m_{H_{2}}=\Gamma_{H_{1}}\circ(\lambda_{H}^{2}\otimes H),u_{H_{2}}=\lambda_{H}^{1})$, and on morphisms by the identity. In fact, if $f\colon \mathbb{H}=(H_{1},H_{2})\rightarrow\mathbb{B}=(B_{1},B_{2})$ is a morphism of cocommutative Hopf braces, then $f\colon (H_{2},m_{H_{2}},u_{H_{2}})\rightarrow(B_{2},m_{B_{2}},u_{B_{2}})$ is a morphism in ${\sf coc}\textnormal{-}{\sf opBT}$. It suffices to show that 
\begin{gather}\label{fmmor}f\circ m_{H_{2}}=m_{B_{2}}\circ(f\otimes f).\end{gather}
Indeed, by a direct computation, one can easily prove that, in the previous situation, the following equality
\begin{gather}\label{fGammacompat}
	f\circ\Gamma_{H_{1}}=\Gamma_{B_{1}}\circ(f\otimes f)
\end{gather}
holds. Therefore, \eqref{fmmor} is obtained as follows:
\begin{itemize}
	\itemindent=-27pt
	\item[] $\hspace{0.38cm}f\circ m_{H_{2}}$
	\item[]$=f\circ\Gamma_{H_{1}}\circ(\lambda_{H}^{2}\otimes H)$
	\item[]$=\Gamma_{B_{1}}\circ((f\circ \lambda_{H}^{2})\otimes f)$ {\footnotesize (by \eqref{fGammacompat})}
	\item[]$=\Gamma_{B_{1}}\circ((\lambda_{B}^{2}\circ f)\otimes f)$ {\footnotesize (by \eqref{morant} for the Hopf algebra morphism $f\colon H_{2}\rightarrow B_{2}$)}
	\item[] $=m_{B_{2}}\circ(f\otimes f).$
\end{itemize}
\begin{example}
	The composition of the functor ${\sf G}$ given in Theorem \ref{isoHBrMP} and the functor ${\sf Q}$ above provides a systematic mechanism for the construction of examples of opposite brace triples based on matched pairs of Hopf algebras: if $(A,A,\varphi_{A},\phi_{A})$ is an object in ${\sf MP}(A)$, where $A$ is a cocommutative Hopf algebra in ${\sf C}$, then
	\[(A, \varphi_{A}\circ(\lambda_{A}\otimes A), \varphi_{A}\circ(A\otimes\lambda_{A})\circ\delta_{A})\]
	is an opposite brace triple in ${\sf C}.$
	
	Indeed, every matched pair in the essential image of ${\sf F}$ (cf. Remark \ref{Fgeneral}) falls into the above situation. 
\end{example}

We now reach the main theorem of this paper, in which we prove that the functors ${\sf P}$ and ${\sf Q}$ induce the desired categorical isomorphism.
\begin{theorem}\label{isomain}
	The categories ${\sf cocHBr}$ and ${\sf coc}\textnormal{-}{\sf opBT}$ are isomorphic.
\end{theorem}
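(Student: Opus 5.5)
The plan is to show that the functors ${\sf P}$ and ${\sf Q}$ are mutually inverse. Both act as the identity on morphisms, so it suffices to check that ${\sf Q}\circ{\sf P}$ and ${\sf P}\circ{\sf Q}$ are the identity on objects. Then the morphism sets automatically correspond: a morphism of cocommutative Hopf braces $(H_1,H_2)\to(B_1,B_2)$ and a morphism of the associated opposite brace triples are the same underlying morphism $f$. This uses \eqref{fwidetildeprod} in one direction and \eqref{fmmor} in the other.

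First, I will treat ${\sf Q}\circ{\sf P}$. Let $(A,m_A,u_A)$ be a cocommutative opposite brace triple. Then ${\sf P}$ gives $\widetilde{\mathbb{A}}=(\widetilde{A},A)$, and ${\sf Q}(\widetilde{\mathbb{A}})=(A,\Gamma_{\widetilde{A}}\circ(\lambda_A\otimes A),\widetilde{\lambda})$, where $\widetilde{\lambda}$ is the antipode of $\widetilde{A}$, namely $u_A$ by Theorem \ref{conswidetildeA}. So the third component is recovered. For the second component, I use \eqref{gammaBTop}, which gives $\Gamma_{\widetilde{A}}\circ(\lambda_A\otimes A)=m_A\circ((\lambda_A\circ\lambda_A)\otimes A)$. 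This equals $m_A$ by \eqref{lambdasquareid}, since $A$ is cocommutative. The underlying Hopf algebra is $A$ itself, so ${\sf Q}{\sf P}$ is the identity on objects.

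Next, I will treat ${\sf P}\circ{\sf Q}$. Let $\mathbb{H}=(H_1,H_2)$ be a cocommutative Hopf brace. Then ${\sf Q}(\mathbb{H})=(H_2,\Gamma_{H_1}\circ(\lambda_H^2\otimes H),\lambda_H^1)$, and ${\sf P}$ sends this to $(\widetilde{H_2},H_2)$. Here $\widetilde{H_2}$ has unit $\eta_H$, product $\widetilde{\mu}_{H_2}$, the common coalgebra structure, and antipode $\lambda_H^1$. By \eqref{muwidetildeH2}, which comes from \eqref{mu1-exp}, we have $\widetilde{\mu}_{H_2}=\mu_H^1$. The unit also agrees, because $\eta_H^1=\eta_H^2$ in any Hopf brace. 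Hence $\widetilde{H_2}=H_1$ as Hopf algebras, and ${\sf P}{\sf Q}(\mathbb{H})=\mathbb{H}$.

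Both compositions are therefore identities, and ${\sf P}$ and ${\sf Q}$ are inverse isomorphisms of categories. I expect no real obstacle, since the substantive identities \eqref{gammaBTop}, \eqref{mu1-exp} and \eqref{muwidetildeH2} are already established. The only point needing care is the involutivity $\lambda_A\circ\lambda_A={\rm id}_A$ used in the first composition, and this is exactly where cocommutativity of $A$ is required.
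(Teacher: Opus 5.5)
Your proposal is correct and follows essentially the same route as the paper. You show that ${\sf P}$ and ${\sf Q}$ are mutually inverse on objects via \eqref{gammaBTop} with \eqref{lambdasquareid} in one direction and \eqref{mu1-exp} in the other. Where the paper invokes uniqueness of the antipode, you simply observe that the antipode of $\widetilde{H_2}$ is $u_{H_2}=\lambda_H^1$ by construction, which is equally valid.
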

\begin{proof}
	If $\mathbb{H}=(H_{1},H_{2})$ is a cocommutative Hopf brace in ${\sf C}$, then the following is obtained
	\[({\sf P}\circ{\sf Q})(\mathbb{H})={\sf P}((H_{2},\Gamma_{H_{1}}\circ(\lambda_{H}^{2}\otimes H),\lambda_{H}^{1}))=(\widetilde{H_{2}},H_{2}),\]
	where $\widetilde{H_{2}}$ is a Hopf algebra whose product is given by
	\[\widetilde{\mu}_{H_{2}}=\mu_{H}^{2}\circ(H\otimes (\Gamma_{H_{1}}\circ(\lambda_{H}^{2}\otimes H)))\circ(\delta_{H}\otimes H)=\mu_{H}^{1},\]
	the last equality being a consequence of \eqref{mu1-exp}. Then, we obtain that $\widetilde{H_{2}}=H_{1}$ by the uniqueness of the antipode for a Hopf algebra structure, and this implies that $({\sf P}\circ{\sf Q})(\mathbb{H})=\mathbb{H}$, i.e., ${\sf P}\circ{\sf Q}={\sf id}_{{\sf cocHBr}}$.
	
	On the other hand, if $(A,m_{A},u_{A})$ is a cocommutative opposite brace triple, then we deduce that
	\[({\sf Q}\circ{\sf P})((A,m_{A},u_{A}))={\sf Q}(\widetilde{\mathbb{A}})=(A,\Gamma_{\widetilde{A}}\circ(\lambda_{A}\otimes A),u_{A})=(A,m_{A},u_{A}),\]
	where the last equality is a consequence of \eqref{gammaBTop} and \eqref{lambdasquareid}. Then, ${\sf Q}\circ{\sf P}={\sf id}_{{\sf coc}\textnormal{-}{\sf opBT}},$ which concludes the proof.
\end{proof}
Given a cocommutative Hopf algebra $A$ in ${\sf C}$, let us denote by ${\sf coc}\textnormal{-}{\sf opBT}(A)$ the full subcategory of cocommutative opposite brace triples over the fixed Hopf algebra $A$. The following corollary is directly obtained.
\begin{corollary}\label{coroMP}
	Let $A$ be a cocommutative Hopf algebra in ${\sf C}$. The categories ${\sf cocHBr}(A)$, ${\sf MP}(A)$ and ${\sf coc}\textnormal{-}{\sf opBT}(A)$ are isomorphic.
\end{corollary}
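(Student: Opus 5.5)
The plan is to restrict the isomorphism of Theorem \ref{isomain} to the subcategories over a fixed $A$, and then compose the result with Theorem \ref{isoHBrMP}.

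First we check that ${\sf P}$ and ${\sf Q}$ restrict to functors between ${\sf coc}\textnormal{-}{\sf opBT}(A)$ and ${\sf cocHBr}(A)$.

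\emph{${\sf P}$ restricts.} For a cocommutative opposite brace triple $(A,m_{A},u_{A})$ over the fixed $A$, we have ${\sf P}((A,m_{A},u_{A}))=(\widetilde{A},A)$. Its second Hopf algebra is exactly $A$, so this object lies in ${\sf cocHBr}(A)$.

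\emph{${\sf Q}$ restricts.} For $\mathbb{A}=(A_{1},A)$ in ${\sf cocHBr}(A)$, we have ${\sf Q}(\mathbb{A})=(A,\Gamma_{A_{1}}\circ(\lambda_{A}\otimes A),\lambda_{A}^{1})$. The underlying Hopf algebra is $H_{2}=A$, so this object lies in ${\sf coc}\textnormal{-}{\sf opBT}(A)$.

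\emph{Morphisms.} Both functors act as the identity on morphisms. The subcategories are full in ${\sf coc}\textnormal{-}{\sf opBT}$ and ${\sf cocHBr}$ respectively. Hence the restrictions are well defined on morphisms.

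Next, the identities ${\sf P}\circ{\sf Q}={\sf id}$ and ${\sf Q}\circ{\sf P}={\sf id}$ from the proof of Theorem \ref{isomain} hold objectwise. They therefore continue to hold after restriction. This gives an isomorphism ${\sf coc}\textnormal{-}{\sf opBT}(A)\cong{\sf cocHBr}(A)$. Composing with the isomorphism ${\sf cocHBr}(A)\cong{\sf MP}(A)$ of Theorem \ref{isoHBrMP}, given by ${\sf F}$ and ${\sf G}$, yields all three isomorphisms. Explicitly, the composite ${\sf Q}\circ{\sf G}$ sends $(A,A,\varphi_{A},\phi_{A})$ to $(A,\varphi_{A}\circ(\lambda_{A}\otimes A),\varphi_{A}\circ(A\otimes\lambda_{A})\circ\delta_{A})$, consistent with the preceding example. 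The inverse ${\sf F}\circ{\sf P}$ sends $(A,m_{A},u_{A})$ to $(A,A,m_{A}\circ(\lambda_{A}\otimes A),\Phi_{A})$, using \eqref{gammaBTop}.

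The only point needing care is the definition of ${\sf cocHBr}(A)$. It is the full subcategory of ${\sf HBr}$ with objects $(H_{1},A)$, and by the paper's convention its morphisms include all Hopf brace morphisms between such objects. One must make sure the morphisms of ${\sf MP}(A)$, namely pairs $(f,f)$ satisfying \eqref{morMP}, match those of ${\sf cocHBr}(A)$ under ${\sf F}$ and ${\sf G}$. Theorem \ref{isoHBrMP} already asserts this, and we assume it. On the opposite-brace side, the morphisms of ${\sf coc}\textnormal{-}{\sf opBT}(A)$ are the Hopf endomorphisms $f$ of $A$ satisfying \eqref{opBTmorcond}. This matches ${\sf cocHBr}(A)$ through the computations \eqref{fwidetildeprod} and \eqref{fmmor}, which were already done.

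So no new calculation is required. The only "obstacle" is bookkeeping: checking that fixing $A$ is respected on objects by both ${\sf P}$ and ${\sf Q}$, which holds by inspection.
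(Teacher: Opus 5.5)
Your proposal is correct and follows the paper's route. The paper's proof is just the one line ``straightforward by Theorems \ref{isoHBrMP} and \ref{isomain}''; your argument restricts ${\sf P}$ and ${\sf Q}$ to the subcategories over the fixed $A$ and composes with ${\sf F}$ and ${\sf G}$, which is a faithful and accurate expansion of that line.
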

\begin{proof}
	It is straightforward by Theorems \ref{isoHBrMP} and \ref{isomain}.
\end{proof}

\section*{Funding}

The  authors are supported by  Ministerio de Ciencia e Innovaci\'on. Agencia Estatal de Investigaci\'on (Spain) grant no. PID2024-15502NB-I00 (European FEDER support included, UE).

Moreover, B. Ramos Pérez is funded by Xunta de Galicia through the Competitive Reference Groups (GRC) grant no. ED431C 2023/31,  and the fellowship grant no. ED481A-2023-023.

%
%
%
%

\bibliographystyle{amsalpha}

\begin{thebibliography}{A}
\bibitem{AGV} I. Angiono, C. Galindo and L. Vendramin, Hopf braces and Yang-Baxter operators, \emph{Proc. Am. Math. Soc.} {\bf 145}(5) (2017) 1981--1995.
\bibitem{Bach} D. Bachiller, Solutions of the Yang-Baxter equation associated to skew left braces, with applications to racks, \emph{J. Knot Theory Ramifications} {\bf 27}(8) (2018) 1850055, 36 pp.
\bibitem{Bax} R.J. Baxter, Partition function of the eight-vertex lattice model, \emph{Ann. Phys.} {\bf 70}(1)(1972) 193--228.
\bibitem{Childs} L.N. Childs, Bi-skew braces and Hopf Galois structures, \emph{New York J. Math.} {\bf 25} (2019) 574--588.
\bibitem{VGRHac} J.~M. Fern\'andez~Vilaboa, R. Gonz\'alez~Rodr\'iguez and B. Ramos~P\'erez, Categorical isomorphisms for Hopf braces, \textit{Hacet. J. Math. Stat.} {\bf 54}(5) (2025) 1872--1896.
\bibitem{VGRRMod} J.M. Fernández Vilaboa, R. González Rodríguez, B. Ramos Pérez and A.B. Rodríguez Raposo, Modules over invertible 1-cocycles, \textit{Turkish J. Math.} {\bf 48}(2) (2024) 248--266.
\bibitem{VGRRProj} J.M. Fernández Vilaboa, R. González Rodríguez, B. Ramos Pérez and A.B. Rodríguez Raposo, Projections of Hopf braces, \emph{Comm. Algebra} {\bf 53}(7) (2025) 3008--3045. 
\bibitem{RGON} R. González Rodríguez, The fundamental theorem of Hopf modules for Hopf braces, \emph{Linear Multilinear Algebra} {\bf70}(20) (2022) 5146--5156.
\bibitem{RGONRAMOS} R. Gonz\'alez~Rodr\'iguez and B. Ramos~P\'erez, About Hopf braces and crossed products, \textit{S\~ao Paulo J. Math. Sci.} {\bf 20}(1) (2026), Paper No. 11.
\bibitem{GV} L. Guarnieri and L. Vendramin, Skew braces and the Yang-Baxter equation, \emph{Math. Comput.} {\bf 86}(307) (2017) 2519--2534.
\bibitem{LST} Y. Li, Y.-H. Sheng and R. Tang, Post-Hopf algebras, relative Rota-Baxter operators and solutions to the Yang-Baxter equation, \textit{J. Noncommut. Geom.} {\bf 18}(2) (2024) 605--630.
\bibitem{PuraE} M.P. López López and E. Villanueva Novoa, The antipode and the (co)invariants of a finite Hopf (co)quasigroup, \emph{Appl. Categor. Struct. } {\bf 21} (2013) 237--247.
\bibitem{MacLane} S. Mac~Lane, {\it Categories for the working mathematician}, Graduate Texts in Mathematics, Vol. 5, Springer, New York-Berlin, 1971.
\bibitem{Sch} P. Schauenburg, On the braiding on a Hopf algebra in a braided category, {\it New York J. Math.} {\bf 4} (1998), 259--263
\bibitem{Yang} C.N. Yang, Some exact results for the many-body problem in one dimension with repulsive delta-function interaction, \emph{Phys. Rev. Lett.} {\bf 19} (1967) 1312--1315.

\end{thebibliography}

\end{document}